\newcommand{\R}{{\mathbb R}}
\newcommand{\Rnn}{{\mathbb R}_{\ge 0}}
\newcommand{\Rp}{{\mathbb R}_{> 0}}
\newcommand{\C}{{\mathbb C}}
\newcommand{\cB}{{\mathcal B}}
\newcommand{\cC}{{\mathcal C}}
\newcommand{\cD}{{\mathcal D}}
\newcommand{\cG}{{\mathcal G}}
\newcommand{\cK}{{\mathcal K}}
\newcommand{\cR}{{\mathcal R}}
\newcommand{\tahn}{{\mathrm{tahn}}}
\newcommand{\diag}{{\mathrm{diag}}}
\newcommand{\bfone}{\mathbf 1}
\newcommand{\inter}{\mathrm{int}}
\newcommand{\pfn}{\rm{PF_n}}
\newcommand{\wpfn}{\rm{WPF_n}}
\def\QED{\mbox{\rule[0pt]{1.3ex}{1.3ex}}} 
\newenvironment{proof}{{\quad \bf Proof:\,}}{\hfill \QED\par}
\newenvironment{proof-of}[1]{{\,\, \bf Proof of #1:\,}}{\hfill\QED\par}
\newtheorem{thm}{Theorem}
\newtheorem{cor}{Corollary}
\newtheorem{defn}{Definition}
\newtheorem{prop}{Proposition}
\newtheorem{exmp}{Example}
\newtheorem{rem}{Remark}
\title{
	Operator-Theoretic Characterization of Eventually Monotone Systems
		\thanks{Aivar Sootla is with the Department of Engineering Science, University of Oxford, Parks Road, Oxford, OX1 3PJ, UK {\tt aivar.sootla@eng.ox.ac.uk}.}
		\thanks{Alexandre Mauroy is with Namur Center for Complex Systems (naXys) and Department of Mathematics, University of Namur, B-5000, Belgium {\tt alexandre.mauroy@unamur.be}}
\thanks{Most of this work was performed when A. Sootla and A. Mauroy were with the University of Li\`{e}ge and held a F.R.S--FNRS fellowship and a return grant from the Belgian Science Policy (BELSPO), respectively.}
	}
\begin{document}
\maketitle 
\IEEEpeerreviewmaketitle
\begin{abstract}
Monotone systems are dynamical systems whose solutions preserve a partial order in the initial condition for all positive times. It stands to reason that some systems may preserve a partial order only after some initial transient. These systems are usually called eventually monotone. While monotone systems have a characterization in terms of their vector fields (i.e. Kamke-M\"uller condition), eventually monotone systems have not been characterized in such an explicit manner. In order to provide a characterization, we drew inspiration from the results for linear systems, where eventually monotone (positive) systems are studied using the spectral properties of the system (i.e. Perron-Frobenius property). In the case of nonlinear systems, this spectral characterization is not straightforward, a fact that explains why the class of eventually monotone systems has received little attention to date. In this paper, we show that a spectral characterization of nonlinear eventually monotone systems can be obtained through the Koopman operator framework. We consider a number of biologically inspired examples to illustrate the potential applicability of eventual monotonicity.
\end{abstract}
\section{Introduction}

The study of dynamical systems whose trajectories preserve a partial order yielded a number of strong stability properties (cf.~\cite{smith2008monotone}). Such systems are called \emph{monotone} in the literature. Besides being an important topic of theoretical research, they have also a great impact in numerous applications such as economics (cf.~\cite{leontief1986input}), biology (cf.~\cite{sontag2007monotone}), and control theory (cf.~\cite{angeli2003monotone}). 

One of the major properties of linear monotone systems (or simply \emph{positive systems}) is that their trajectories remain nonnegative given a nonnegative initial condition, where nonnegativity is understood entry-wise. A number of results for positive systems were derived using the celebrated Perron-Frobenius theorem describing strong spectral properties of nonnegative matrices. These spectral properties are sometimes collectively called \emph{Perron-Frobenius} properties (cf.~\cite{elhashash2008general,noutsos2006perron}). To our best knowledge, it was first noticed by~\cite{friedland1978inverse} that some matrices with a few negative entries possess the Perron-Frobenius properties. It was later observed that there are systems whose trajectories become positive only after an initial transient, given a positive initial condition. The proof of this remarkable fact relies on the Perron-Frobenius property of the matrices, and lead to a number of results in linear algebra in the context of \emph{eventual positivity} (\cite{zaslavsky1999jordan,noutsos2008reachability,olesky2009m}). Recently eventual positivity has also been introduced to control theory (\cite{altafini2015predictable,altafini2015realizations}), while providing a number of powerful results. For example, in~\cite{altafini2015realizations}, a realization result for input-output positive systems was established using eventual positivity. 

Nonlinear asymptotically or \emph{eventually monotone} systems have received little attention to date. They were considered in \cite{hirsch1985systems}, where strong convergence results were derived, and more recently in \cite{wang2008singularly}, but only implicitly. These systems received increasing attention only recently, probably because a spectral characterization of nonlinear systems is not straightforward. Such spectral characterization of a nonlinear system can be obtained through the properties of the so-called Koopman operator (also called composition operator). This operator has been introduced in 1931 by \cite{Koopman1931} (see also \cite{budivsic2012applied} for a review) and since then, its spectrum has been extensively studied in a theoretical context (see e.g. \cite{caughran1975spectra,ding1998point,ridge1973spectrum,shapiro1987essential}) and in the case of dynamical systems (\cite{Gaspard1995}). More recently, a number of studies focused on the eigenfunctions of the Koopman operator, starting with the seminal work by \cite{mezic2005} and investigating the interplay between the eigenfunctions and the geometric properties of the systems (e.g., phase reduction by~\cite{mauroy2013isostables}, stability analysis by~\cite{mauroy2014global}). The present paper adds another result in this framework. Specifically, it provides a spectral characterization of nonlinear eventually monotone systems using the Koopman operator framework. However, the analysis is limited to the basin of attraction of an exponentially stable equilibrium. If the equilibrium is stable, but not exponentially stable, the Koopman operator may have a continuous spectrum (cf.~\cite{gaspard2005liouville}) and the spectral properties of the system are more difficult to ascertain.

In this paper, we first (re-)derive some results for linear eventually positive systems, where we follow the development by~\cite{zaslavsky1999jordan,noutsos2008reachability,olesky2009m}. We then translate these results to the nonlinear setting and provide a characterization of eventually monotone systems, which mirrors the linear case (i.e. eventual positivity). In particular, we show that an eventually monotone system with an exponentially stable equilibrium has a real, negative eigenvalue of the Jacobian matrix at this equilibrium. We also prove that the corresponding eigenfunction of the Koopman operator, which can be seen as an infinite-dimensional eigenvector, is monotone in some sense.

For systems with continuously differentiable vector fields, there is another nonlinear generalization of positivity of linear systems, which is called \emph{differential positivity} and was recently introduced by~\cite{forni2015differentially}. Differential positivity is a more general concept than monotonicity, in the sense that it is defined with a more general partial order related to a cone field instead of a constant cone. We establish the conditions for a differentially positive system to be eventually monotone. This indicates that eventual monotonicity can offer a trade-off between the class of monotone systems and the class of differentially positive systems. We also provide a tool to compute candidate cones with respect to which the system is strongly eventually monotone. To our best knowledge, there exists no equivalent tool for monotone systems. We consider a number of examples inspired by models from biological and biomedical applications. In particular, we show that the gut kinetics subsystem in the glucose consumption model for type I diabetic patients (\cite{man2006system}) and the toxin-antitoxin system (\cite{cataudella2013conditional}) are strongly eventually monotone.

The rest of the paper is organized as follows. In Section~\ref{s:prel}, we briefly introduce monotone systems and provide our main motivation for considering eventual monotonicity. The results related to linear eventually positive systems are covered in Section~\ref{s:even-pos}. They are extended to the nonlinear setting in Section~\ref{s:even-monot}, where we also discuss the relationship between eventual monotonicity and differential positivity and derive a tool to compute candidate cones with respect to which the system is eventually monotone. 
Our theoretical results are illustrated with examples in Section~\ref{sec:examples}. Concluding remarks are given in Section~\ref{s:con} and most of the proofs are in Appendix~\ref{app:proofs}.

\section{Preliminaries}\label{s:prel}

\subsection{Notations and standing assumptions}
We consider dynamical systems
\begin{equation}
\label{eq:sys}
\dot x = f(x),
\end{equation} 
with $f: \cD \rightarrow \R^n$ and where $\cD$ is an open subset of $\R^n$. We will assume that the vector field is continuously differentiable on $\cD$ ($C^2$ for some results), which ensures existence, uniqueness, and continuity of solutions. We define the flow map $\phi: \R \times \cD \rightarrow \R^n$, where $\phi(t, x_0)$ is a solution to the system~\eqref{eq:sys} with an initial condition $x(0) = x_0$. Throughout the paper, we make the standing assumption that the system admits an equilibrium $x^\ast$. When the equilibrium is attracting, we denote its basin of attraction by $\cB(x^\ast) = \{ x \in \cD \bigl| \lim_{t\rightarrow \infty} \phi(t, x) = x^\ast \}$. We also denote the Jacobian matrix $(\partial f/\partial x)$ by $J(x)$. The eigenvalues of $J(x^\ast)$ are $\{\lambda_1, \dots, \lambda_n\}$ (counted with their algebraic multiplicities). We order them according to their real parts, i.e. $\Re(\lambda_i) \ge \Re(\lambda_j)$ for all $i \le j$. The corresponding right and left eigenvectors of $J(x^\ast)$ are denoted by $v_i$ and $w_i$, respectively. The spectral radius $\rho(A)$ of a matrix $A\in\R^{n\times n}$ is equal to $\max|\lambda_i|$, where $\lambda_i$ are the eigenvalues of $A$. The matrix $A^T$ denotes the transpose of the matrix $A$.

We list below for future reference a few standing assumptions on (the equilibrium of) the system~\eqref{eq:sys}.
\begin{itemize}
	\item[A1.] The equilibrium point is asymptotically stable; 
	\item[A2.] The eigenvalues of the Jacobian matrix $J(x^\ast)$ satisfy $\Re(\lambda_i) \neq 0$ for all $i$;
	\item[A3.] The Jacobian matrix $J(x^\ast)$ is diagonalizable, i.e. the eigenvectors $v_i$ are linearly independent and the algebraic multiplicity of an eigenvalue is equal to its geometric multiplicity.
\end{itemize}
Note that when Assumptions A1 and A2 are satisfied together, $J(x^\ast)$ is Hurwitz and the equilibrium is exponentially stable.

\subsection{Partial order and monotonicity}

We will study the properties of the system \eqref{eq:sys} with respect to a partial order. A relation $\succeq$ is called a {\it partial order} if it is reflexive ($x\succeq x$), transitive ($x\succeq y$, $y\succeq z$ implies $x\succeq z$), and antisymmetric ($x\succeq y$, $y\succeq x$ implies $x = y$). Partial orders can be defined with cones $\cK\subset\R^n$. A set $\cK$ is a \emph{positive cone} if $\Rnn \cK \subseteq \cK$, $\cK + \cK \subseteq \cK$, $\cK\cap \cK \subseteq \{ 0\}$. We will use the following cone $\cK_L$, which is called \emph{Lorentz} and is characterized as follows:
\begin{gather*}
\cK_L = \left\{ x \in \R^n \Bigl| \sum\limits_{i=2}^n x_i^2 \le x_1^2,\, x_1 \ge 0 \right\}.
\end{gather*}
We define a partial order as: $x\succeq_\cK y$ if and only if $ x - y \in \cK$. We write $x\not \succeq_\cK y$ if the relation  $x \succeq_\cK y$ does not hold. We will also write $x\succ_\cK y$ if $x\succeq_\cK y$ and $x\ne y$, and $x\gg_\cK y$ if $x- y \in \inter(\cK)$. From this point on, we use the notations $\succ$, $\succeq$, $\gg$ if $\cK$ is the nonnegative orthant $\Rnn^n = \{x\in\R^n | x_i \ge 0\, \forall i =1, \dots,n \}$. We say that the function $g:\R^n \rightarrow \R$ is monotone with respect to the cone $\cK$ if for all $x\succeq_\cK y$, we have $g(x) \ge g(y)$. A set $M$ is called \emph{p-convex} if, for every $x,y \in M$ such that $x\succeq_\cK y$ and every $\lambda\in(0,1)$, we have $\lambda x+ (1-\lambda) y\in M$. We also use order-intervals $[x, y]_\cK =\left\{z \in \R^n \Bigl| x \preceq_\cK z \preceq_\cK y \right\} \subset \R^n$, with $x,y\in \R^n$. Systems whose flows preserve a partial order relation $\succeq_\cK$ are called \emph{monotone systems}. 
\begin{defn}\label{def:mon}
The system is \emph{monotone} on $\cD$ with respect to the cone $\cK$ if $\phi(t,x)\preceq_\cK \phi(t,y)$ for all $t\ge 0$ and for all $x$,$y\in\cD$ such that $x\preceq_\cK y$. The system is called \emph{strongly monotone} on $\cD$ with respect to the cone $\cK$ if it is monotone and if $\phi(t,x) \ll_\cK \phi(t,y)$ holds for all $t>0$ provided that $x$,$y\in\cD$ and $x\prec_\cK y$.
\end{defn}
There exists a certificate for monotonicity with respect to an orthant, which is called \emph{Kamke-M\"uller} conditions (cf.~\cite{smith2008monotone}). For differentiable vector fields, these conditions imply the following result.
\begin{prop}\label{prop:kamke}
Consider the system~$\dot x = f(x)$, where $f$ is differentiable, and let the set $\cD$ be p-convex. Then
\begin{gather*}
\frac{\partial f_i}{\partial x_j}\ge 0,\quad\forall~i\ne j,\quad \forall x\in\cD
\end{gather*}
if and only if the system~\eqref{eq:sys} is monotone on $\cD$ with respect to $\Rnn^n$.
\end{prop}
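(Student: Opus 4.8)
This is the classical Kamke--Müller condition for the orthant order, and I would follow the standard route (as in \cite{smith2008monotone}), establishing the two implications separately. The key intermediate notion is \emph{quasimonotonicity}: the vector field is quasimonotone if, whenever $a \preceq b$ in $\cD$ with $a_i = b_i$ for some index $i$, one has $f_i(a) \le f_i(b)$. My plan is to show first that the differential condition is equivalent to quasimonotonicity, and then that quasimonotonicity is equivalent to monotonicity of the flow.

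For the sufficiency direction, assume $\partial f_i/\partial x_j \ge 0$ for all $i \ne j$ on $\cD$. Given $a \preceq b$ in $\cD$ with $a_i = b_i$, the segment $\{a + s(b-a) : s \in [0,1]\}$ lies in $\cD$ by p-convexity (its endpoints are ordered), so
\begin{equation*}
f_i(b) - f_i(a) = \int_0^1 \sum_{j \ne i} \frac{\partial f_i}{\partial x_j}\bigl(a + s(b-a)\bigr)\,(b_j - a_j)\, ds \ge 0,
\end{equation*}
where the term $j = i$ drops out because $a_i = b_i$; this is exactly quasimonotonicity. Now fix ordered initial conditions $x \preceq y$ and set $z(t) = \phi(t,y) - \phi(t,x)$, which solves the nonautonomous system $\dot z = F(t,z)$ with $F(t,z) = f(\phi(t,x) + z) - f(\phi(t,x))$ and $z(0) = y - x \succeq 0$. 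On a boundary face of the orthant, where $z_i = 0$ and $z \succeq 0$, quasimonotonicity applied to $a = \phi(t,x)$ and $b = \phi(t,x) + z$ gives $F_i(t,z) = f_i(\phi(t,x)+z) - f_i(\phi(t,x)) \ge 0$. This is precisely the subtangentiality (Nagumo) condition for positive invariance of $\Rnn^n$ under $\dot z = F(t,z)$, whence $z(t) \succeq 0$ for all $t \ge 0$, i.e. $\phi(t,x) \preceq \phi(t,y)$.

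For the necessity direction, I would argue by contradiction: suppose $(\partial f_i/\partial x_j)(x_0) < 0$ for some $i \ne j$ and some $x_0 \in \cD$. Take $x = x_0$ and $y = x_0 + h e_j$ for small $h > 0$, so that $x \preceq y$ and both lie in $\cD$. Writing $g(t) = \phi_i(t,y) - \phi_i(t,x)$, we have $g(0) = 0$ (since $i \ne j$) and
\begin{equation*}
g'(0) = f_i(y) - f_i(x) = \int_0^h \frac{\partial f_i}{\partial x_j}(x_0 + s\, e_j)\, ds < 0
\end{equation*}
for $h$ small enough, by continuity of the partial derivative. Hence $g(t) < 0$ for small $t > 0$, which means $\phi(t,y) \not\succeq \phi(t,x)$ and contradicts monotonicity.

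The main obstacle is the invariance step in the sufficiency direction: one must pass from the infinitesimal inflow condition $F_i \ge 0$ on the face $\{z_i = 0\}$ to genuine positive invariance of the closed orthant. Since this condition involves only a non-strict inequality (the flow may be tangent to the boundary), the conclusion is not immediate and requires the Nagumo--Brezis invariance theorem, or equivalently a perturbation argument in which one studies $\dot z = F(t,z) + \varepsilon \bfone$, shows the strictly perturbed trajectory stays in the open orthant, and then lets $\varepsilon \to 0$. The role of p-convexity is also essential and easily overlooked: it is what guarantees that the segments along which the partial derivatives are integrated never leave $\cD$.
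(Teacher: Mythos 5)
Your proof is correct: the route differential condition $\Rightarrow$ quasimonotonicity $\Rightarrow$ flow monotonicity (via the Nagumo subtangentiality condition, with the $\varepsilon$-perturbation argument to handle tangency), plus the contradiction argument for necessity, is the standard proof, and your handling of p-convexity (needed only to integrate the partials along ordered segments) is exactly right. Note that the paper itself offers no proof of this proposition --- it is the classical Kamke--M\"uller result quoted from \cite{smith2008monotone} --- and your argument is essentially the one found in that reference, so there is nothing in the paper for it to diverge from.
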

This result may be extended to other orthants and cones in $\R^n$ (cf.~\cite{walcher2001cooperative}). In order to continue our discussion we introduce the following notions.

\begin{defn}
A matrix $A \in\R^{n\times n}$ is a \emph{Metzler matrix} if its off-diagonal elements are nonnegative. A matrix $A \in\R^{n\times n}$ is \emph{reducible} if there exist a permutation $T$ and an integer $k$ such that 
\begin{gather*}
T A T^{-1} = \begin{pmatrix}
B & C \\
0 & D
\end{pmatrix},
\end{gather*}
where $B \in \R^{k\times k}$, $D\in \R^{n-k\times n-k}$, $C\in\R^{k\times n-k}$ and $0$ stands for the $(n-k) \times k$ zero matrix. If no such $T$ and $k$ exist, then the matrix is called \emph{irreducible}.
\end{defn}

In light of this definition, we state that a system~\eqref{eq:sys} is monotone if and only if the Jacobian  matrix $J(x)$ of the vector field is a Metzler matrix for all $x\in \cD$. If $J(x)$ is an irreducible Metzler matrix for all $x\in\cD$, then the system is strongly monotone (cf.~\cite{smith2008monotone}). Additionally, if  the system is linear (in which case the Jacobian matrix is constant), then monotonicity is equivalent to \emph{positivity}, which means that the system is invariant with respect to $\Rnn^{n}$. 

\subsection{From monotonicity to eventual monotonicity}
Monotonicity is a strong property which leads to a number of strong results in dynamical systems. However, some studies (eg.~\cite{sontag2007monotone,wang2008singularly}) suggest that many systems exhibit \emph{a near monotone} behavior, a property which can potentially lead to results similar to the case of monotone systems. This is illustrated with the following two examples.
\begin{exmp}\label{ex:linear}
Consider the linear system with $0<\epsilon \ll 1$:
\begin{gather*}
\begin{pmatrix}
\dot x_1 \\  \dot x_2 \\ \varepsilon \dot x_3
\end{pmatrix} = \underbrace{\begin{pmatrix}
     -6  &  10&  4\\
     -7  &  2 & 12 \\
      3  & -3 & -4
\end{pmatrix}}_{A} \begin{pmatrix}
x_1 \\ x_2 \\ x_3
\end{pmatrix}. 
\end{gather*}
 We can eliminate the variable $x_3$ using singular perturbation theory and obtain the system
\begin{align*}
&\begin{pmatrix}
\dot {\tilde x}_1 \\ \dot {\tilde x}_2
\end{pmatrix}  = \tilde A \begin{pmatrix}
\tilde x_1 \\ \tilde x_2
\end{pmatrix}
\end{align*}
where
\begin{gather*}
\tilde A  = \begin{pmatrix}
     -6  &  10 \\
     -7  &  2 \end{pmatrix} 
		+ \frac{1}{4} \begin{pmatrix}  4 \\ 12
\end{pmatrix} 
\begin{pmatrix}
      3 \\ -3
\end{pmatrix}^T = \begin{pmatrix}
   -3  &  7\\
    2  & -7
\end{pmatrix}.
\end{gather*}

The system matrix $\tilde A$ of the reduced order system is Metzler, while the matrix $A$ is not. It can be verified that the states $x_1(t)$, $x_2(t)$ converge to $\tilde x_1(t)$, $\tilde x_2(t)$, while $x_3(t)$ converges to a constant. Hence, after some time $\tau_0>0$, the flow of the full system $x(t)$ becomes positive for all initial conditions $x(0) \in \Rnn^3$.
\end{exmp}
\begin{exmp}\label{ex:sing-pert}
For nonlinear systems, the argument is similar. Consider the system with $0<\varepsilon \ll 1$:
  \begin{equation}
    \label{sys-full}
    \begin{aligned}
      \dot x_1 &= \dfrac{10}{1 + x_2^2} - x_1, \\
      \dot x_2 &= \underbrace{\dfrac{x_1}{x_1+1}}_{h(x_1)} + 3 x_3  - x_2, \\
      \varepsilon\dot x_3 &= \dfrac{1}{1 + x_1} - x_3. 
    \end{aligned}
  \end{equation}
It can be verified that the system~\eqref{sys-full} is not monotone with respect to any orthant, which is due to the function $h(x_1)$ in the second equation. However, for $\varepsilon\rightarrow 0$, the system~\eqref{sys-full} is reduced to
    \begin{equation}
    \label{sys-red}
    \begin{aligned}
      \dot{\tilde x}_1 &= \dfrac{10}{1 + \tilde x_2^2} - \tilde x_1, \\
      \dot{\tilde x}_2 &= 1 + \dfrac{2}{\tilde x_1+1}- \tilde x_2, 
    \end{aligned}
  \end{equation}
which is a monotone system with respect to the orthant $\diag\{1, -1\}\Rnn^2$. After some time $\tau_0>0$, the flow of the full system~\eqref{sys-full} converges to the flow of the reduced system~\eqref{sys-red}. Hence the flow of~\eqref{sys-full} may still preserve the partial order in all three variables. 
\end{exmp}
These examples suggest that it is desirable to consider a class of systems larger than monotone systems: the class of systems that are asymptotically monotone. These systems are not limited to near monotone systems in the context of singular perturbation theory. We call them \emph{eventually monotone} systems.

\section{Eventually Positive Linear Systems}
\label{s:even-pos}

\subsection{(Strongly) Eventually Positive Dynamical Systems}
In this section, we present a few results on linear eventually positive systems. These results will be extended to nonlinear eventually monotone systems in Section \ref{s:even-monot}.
\begin{defn}\label{def:ev-pos}
The system $\dot x = A x$ is \emph{eventually positive} if for any $x\succeq 0$ there exists $\tau_0\ge 0$ such that $\phi(t, x) \succeq 0$ for all $t\ge \tau_0$. The system is \emph{strongly eventually positive} if it is eventually positive and for any $x\succ 0$ there exists $\tau_0\ge 0$ such that $\phi(t,x)\gg 0$ for all $t\ge \tau_0$. 
\end{defn}

(Strong) eventual positivity is a generalization of (strong) positivity, which is achieved by allowing $\tau_0$ be larger than zero. While positivity of a system can be characterized by the sign pattern of the matrix $A$ (Kamke-M\"uller condition), eventual positivity of a system has a characterization through spectral properties of the matrix $A$. These spectral properties stem from the celebrated Perron-Frobenius theorem, which states that for any irreducible nonnegative matrix $A$ its spectral radius $\rho(A)$ is a simple (positive) eigenvalue and the corresponding eigenvectors can be chosen to be positive. This leads to the following definition (cf.~\cite{elhashash2008general}, \cite{zaslavsky1999jordan}). A matrix $A \in \R^{n\times n}$ is said to possess the {\it (weak) Perron-Frobenius property} if $\rho(A)$ is a positive eigenvalue of $A$ and the corresponding right eigenvector can be chosen to be nonnegative. A matrix $A\in \R^{n\times n}$ possesses the {\it strong Perron-Frobenius property} if $\rho(A)$ is a simple, positive eigenvalue of $A$ and the corresponding right eigenvector can be chosen to be positive. We denote by $\wpfn$ (respectively, $\pfn$) the class of matrices $A\in \R^{n\times n}$ such that $A$ and $A^T$ possess the Perron-Frobenius property (respectively, the strong Perron-Frobenius property). 

Some matrices can have a small negative entry, while the rest of the entries are nonnegative, and still possess the Perron-Frobenius property. For instance, we can take the matrix $B = e^{A t}$ with a small enough $t$ and $A$ from Example~\ref{ex:linear}. In fact, it can be verified that the matrix $B^{k}$ becomes positive for a large enough $k$. This leads to the following definition. A matrix $A \in \R^{n\times n}$ is called eventually nonnegative (respectively, eventually positive) if there exists $k_0 \in\mathbb{N}$ such that, for all $k\ge k_0$, the matrices $A^k$ are nonnegative (respectively, positive). The relationship between Perron-Frobenius properties and eventually positive/nonnegative matrices is characterized by the following inclusions of the sets of matrices:
\begin{multline} \label{inclusion:pfn}
\pfn = \{\textrm{Eventually positive matrices}\}  \\
\subset \{\textrm{Nonnilpotent, eventually nonnegative matrices} \} \\
\subset \wpfn
\end{multline}
where $A$ is nilpotent if there exists an integer $k \in \mathbb{N}$ such that $A^k = 0$. Every inclusion is shown to be strict by finding a suitable counterexample (cf.~\cite{elhashash2008general}).

Now we characterize eventually positive systems $\dot x = Ax$, where we follow the developments by~\cite{noutsos2008reachability,olesky2009m}, but we restrict ourselves by additional assumptions. 

\begin{prop}\label{prop:ev-pos-dyn}
Consider the system $\dot x = A x$ satisfying Assumption A3, with $\lambda_j$ being the eigenvalues of $A$. \\
(i) If the system is eventually positive, then $\lambda_1$ is real, and the right and left eigenvectors $v_1$, $w_1$ of $A$ corresponding to $\lambda_1$ can be chosen to be nonnegative;\\
(ii) Furthermore, the system is strongly eventually positive if and only if $\lambda_1$ is simple, real, $\lambda_1 > \Re(\lambda_j)$ for all $j \ge 2$, and the right and left eigenvectors $v_1$, $w_1$ of $A$ corresponding to $\lambda_1$ can be chosen to be positive.
\end{prop}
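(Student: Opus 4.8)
The plan is to translate both statements about the \emph{dynamical system} $\dot x = Ax$ into equivalent statements about the single \emph{matrix} $e^{At}$, and then read off the spectral conclusions from the Perron--Frobenius machinery recalled above. The first step is the reduction: I claim the system is eventually positive if and only if $e^{At}$ is eventually nonnegative (entrywise) for all large $t$, and strongly eventually positive if and only if $e^{At}$ is eventually positive for all large $t$. One direction is immediate, since a nonnegative matrix maps $\Rnn^n$ into itself, while a positive matrix sends every $x \succ 0$ to a vector $\gg 0$. For the converse I would test on the standard basis vectors $e_j \succeq 0$: column $j$ of $e^{At}$ equals $\phi(t,e_j)$, so eventual positivity of each of the finitely many trajectories $\phi(\cdot,e_j)$ yields a common threshold $\tau_0 = \max_j \tau_0(e_j)$ past which every column, hence the whole matrix, is nonnegative (resp.\ positive). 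This turns the problem into a question about the matrix $e^{At}$, whose eigenvalues are exactly $e^{\lambda_j t}$ and whose right/left eigenvectors coincide with those of $A$.

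For part (i), note that $e^{At}$ is invertible, hence nonnilpotent, and nonnegative for $t \geq t_0$; by the inclusion \eqref{inclusion:pfn} it therefore lies in $\wpfn$, so both $e^{At}$ and $(e^{At})^T$ enjoy the weak Perron--Frobenius property. Since $\rho(e^{At}) = \max_j e^{\Re(\lambda_j)t} = e^{\Re(\lambda_1)t}$, the weak property says this number is an actual eigenvalue of $e^{At}$, i.e.\ $e^{\lambda_k t} = e^{\Re(\lambda_1)t} > 0$ for some $k$. This pins down $\Re(\lambda_k) = \Re(\lambda_1)$ and $\Im(\lambda_k)\, t \in 2\pi\Z$; because the finitely many dominant eigenvalues with nonzero imaginary part can satisfy the latter condition only on a discrete set of times, choosing any $t \geq t_0$ outside this set forces $\Im(\lambda_k) = 0$, so a real dominant eigenvalue exists and, by our ordering, $\lambda_1$ is real. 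The accompanying nonnegative Perron eigenvector of $e^{At}$ then lies in the $\lambda_1$-eigenspace of $A$ (for such nonresonant $t$), delivering a nonnegative $v_1$; applying the same argument to $(e^{At})^T = e^{A^T t}$ yields a nonnegative $w_1$.

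For part (ii) I would argue both implications through the characterization $\pfn = \{\text{eventually positive matrices}\}$ from \eqref{inclusion:pfn}, now with $e^{At}$ genuinely positive. If the system is strongly eventually positive then $e^{At} > 0$ for large $t$, so by Perron's theorem $\rho(e^{At}) = e^{\Re(\lambda_1)t}$ is a \emph{simple} eigenvalue that strictly dominates every other eigenvalue in modulus and carries a strictly positive eigenvector (and likewise for the transpose). Translating back along $\lambda \mapsto e^{\lambda t}$: strict dominance in modulus reads $\Re(\lambda_1) > \Re(\lambda_j)$ for $j \geq 2$ (and, as in (i), $\lambda_1$ is real), while simplicity of $e^{\lambda_1 t}$ --- now unambiguous because strict dominance rules out any coincidence $e^{\lambda_j t} = e^{\lambda_1 t}$ --- forces $\lambda_1$ to be a simple eigenvalue of $A$; the positive eigenvectors give $v_1, w_1 \gg 0$. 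For the converse, these spectral hypotheses make $\lambda_1$ the unique dominant eigenvalue, so the spectral expansion gives $e^{-\lambda_1 t} e^{At} = v_1 w_1^T + \sum_{j\geq 2} e^{(\lambda_j - \lambda_1)t} v_j w_j^T \to v_1 w_1^T$, and the limit is entrywise positive as the outer product of two positive vectors; hence $e^{At} > 0$ for large $t$ and the system is strongly eventually positive by the reduction of the first paragraph.

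The routine part is the reduction to $e^{At}$ and the reverse direction of (ii), which is a clean asymptotic estimate. I expect the main obstacle to be the spectral translation in part (i): the map $\lambda \mapsto e^{\lambda t}$ is many-to-one, so distinct eigenvalues of $A$ can collapse to a common eigenvalue of $e^{At}$ (resonances $e^{\lambda_i t} = e^{\lambda_j t}$), and a purely imaginary shift can make a complex eigenvalue attain the real spectral radius only at isolated instants. The care needed to conclude that the dominant eigenvalue is \emph{genuinely real} --- rather than merely that its modulus is attained --- and that the nonnegative Perron vector of $e^{At}$ sits in the correct eigenspace of $A$, is the crux; fortunately the strict dominance assumed in (ii) removes these coincidences and makes that half comparatively transparent.
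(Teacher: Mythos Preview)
Your proof is correct, and the overall shape---reduce to a statement about a single matrix, invoke Perron--Frobenius, translate the spectral data back to $A$---matches the paper's. The difference lies in the intermediate object. The paper invokes Proposition~\ref{prop:exp-pos} (from Noutsos--Tsatsomeros) to obtain a scalar $a$ with $A + aI \in \wpfn$ (resp.\ $\pfn$), and then translates via the \emph{shift} $\tilde\lambda_j = \lambda_j + a$. You instead fix a time $t$, apply Perron--Frobenius directly to the nonnegative (resp.\ positive) matrix $e^{At}$, and translate via the \emph{exponential} $\lambda_j \mapsto e^{\lambda_j t}$. The shift is a bijection on spectra, so the back-translation is automatic: in particular, a complex $\lambda_j$ with $\Re(\lambda_j)=\lambda_1$ would satisfy $|\lambda_j+a|>\lambda_1+a=\rho(A+aI)$, which is impossible, so such eigenvalues are excluded for free. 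The exponential is not injective, and this is exactly the resonance issue you flag---you must choose $t$ outside a countable set to ensure that $e^{\lambda_j t}=e^{\lambda_1 t}$ forces $\lambda_j=\lambda_1$, and only then can you identify the Perron eigenvector of $e^{At}$ with an eigenvector of $A$. Your handling of this is correct (and relies on Assumption~A3 to match the eigenspaces). The upshot: the paper's route is shorter because the cited proposition absorbs the analytic work, while yours is more self-contained, needing only the classical Perron theorem and the inclusion~\eqref{inclusion:pfn}. The sufficiency direction of (ii) is essentially identical in both proofs: the expansion $e^{-\lambda_1 t}e^{At}\to v_1 w_1^T\gg 0$.
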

 The proof is found in~\cite{sootla2015evpos} and in Appendix~\ref{app:proofs}. Proposition~\ref{prop:ev-pos-dyn} establishes that strong eventual positivity of a system is a spectral condition on the matrix $A$, which is necessary and sufficient. Eventual positivity of a system lacks sufficiency, which is consistent with the inclusions in~\eqref{inclusion:pfn}.
We will refer to $\lambda_1$ from Proposition~\ref{prop:ev-pos-dyn} as the \emph{dominant eigenvalue}. Based on the proof of this result, eventual positivity of $\dot x = A x$ rules out a complex eigenvalue $\lambda_j$ for some $j$ such that $\lambda_1 = \Re(\lambda_j)$. We illustrate this property on an example. 
\begin{exmp}
	Consider the matrix 
	\begin{gather*}
	A = \begin{pmatrix}
	-10 & - 10& 14 \\
	4   & 1 & -11 \\
	0 & 3 & -9
	\end{pmatrix}
	\end{gather*}
	whose eigenvalues are $\lambda_1 = - 6$, $\lambda_{2} = -6 + 6 i$, $\lambda_3 = -6 - 6i$. It has positive eigenvectors corresponding to the simple eigenvalue $\lambda_1 = -6$, but the system $\dot x = A x$ is not eventually positive. Indeed, let $v_i$, $w_i$ be the right and left eigenvectors corresponding to $\lambda_i$, then we have:
	\begin{multline*}
	e^{A t} = e^{-6 t} (v_1 w_1^T + e^{6 i t} v_2 w_2^T + e^{-6 i t} v_3 w_3^T) = \\e^{-6 t} (v_1 w_1^T + 2 \Re(e^{6 i t} v_2 w_2^T)).
	\end{multline*}
	Since the magnitude of some entries in $2 \Re(v_2 w_2^T e^{6 i t})$ is larger than the corresponding entries of $v_1 w_1^T$, there exists no $\tau_0$ such that the matrix $e^{A t}$ is positive for all $t>\tau_0$ and the system $\dot x = A x$ is not eventually positive.
	\end{exmp}

\subsection{(Eventual) Positivity with Respect to a Cone }\label{ss:cone-ev-pos}
A system $\dot x = A x$ is said to be \emph{positive with respect to a cone $\cK$} or \emph{$\cK$-positive} if $e^{A t} \cK \subseteq \cK$ for any $t\ge 0$. $\cK$-positive systems are also well-studied in the literature. For example, any matrix $A$ with a simple and real dominant eigenvalue (i.e., $\lambda_1 > \Re(\lambda_j)$ for all $j\ge 2$) leads to a system $\dot x = A x$ that is positive with respect to some cone $\cK$~(\cite{stern1991exponential}). This result raises a question on a relationship between eventually positive and $\cK$-positive systems, which we will investigate in this section. Provided that the system $\dot x = Ax$ satisfies Assumption~A3, consider the following set:
\begin{multline}\label{ice-cream-cones}
\cK_{\alpha}=  \Bigg\{ y \in \R^n \Bigl| \left(\sum\limits_{i =2}^n \alpha_i |w_i^T y|^2\right)^{1/2} \le w_1^T y \Bigg\}\,,
\end{multline}
where $w_i$ are the left eigenvectors of the matrix $A$, and $\alpha_i > 0$ are chosen a priori. Every set $\cK_\alpha$ is a cone since it can be transformed to a Lorentz cone using the transformation $W = \begin{pmatrix} w_1 & \sqrt{\alpha_2} w_2 & \cdots & \sqrt{\alpha_n}w_n\end{pmatrix}^T$ and the change of variables $x = W y$. According to Assumption~A3, the vectors $w_i$ are linearly independent, $W$ is invertible, and our cones are well-defined. We proceed by reformulating a certificate for strong eventual positivity in terms of the cones $\cK_\alpha$ (the proof can be found in~\cite{sootla2015evpos} and in Appendix~\ref{app:proofs}):

\begin{prop}\label{prop:pos-cone}
	Consider the system $\dot x = A x$ satisfying Assumption A3, with $\lambda_j$ being the eigenvalues of $A$. Let $\lambda_1$ be simple, real and negative, and $\lambda_1 > \Re(\lambda_j)$ for all $j\ge 2$. Then: \\
	(i) the system is $\cK_\alpha$-positive for any positive vector $\alpha$;\\
	(ii) the system is strongly eventually positive if and only if there exist positive scalars $\beta_i$ and $\gamma_i$ for $i= 2,\dots,n$ such that $\cK_{\beta} \subset \inter(\Rnn^n)\cup\{0\}$ and  $\Rnn^n\subset \inter(\cK_{\gamma})\cup\{0\}$.  
\end{prop}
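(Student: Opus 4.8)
My plan rests on the change of variables $z = W y$ (with the convention $\alpha_1 := 1$) that maps $\cK_\alpha$ onto the Lorentz cone $\cK_L$ and diagonalizes the flow. Since $w_i$ is a left eigenvector, $w_i^T e^{At} = e^{\lambda_i t} w_i^T$, so along a trajectory $z_1(t) = e^{\lambda_1 t} z_1(0)$ and $|z_i(t)| = e^{\Re(\lambda_i) t}|z_i(0)|$ for $i \ge 2$. This single identity drives both parts. For (i) I would take $y \in \cK_\alpha$, i.e. $z(0) \in \cK_L$; because $\Re(\lambda_i) \le \lambda_1$ and $t \ge 0$ give $e^{2\Re(\lambda_i) t} \le e^{2\lambda_1 t}$, I bound $\sum_{i\ge 2}|z_i(t)|^2 \le e^{2\lambda_1 t}\sum_{i\ge 2}|z_i(0)|^2 \le (e^{\lambda_1 t} z_1(0))^2 = z_1(t)^2$ with $z_1(t) \ge 0$, so $z(t) \in \cK_L$ and hence $e^{At}\cK_\alpha \subseteq \cK_\alpha$.

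For (ii) the governing geometric fact is that the axis of every $\cK_\alpha$ is the ray $\Rnn v_1$ (since $w_i^T v_1 = 0$ for $i \ge 2$ by biorthogonality), and that strict dominance aligns every trajectory with $w_1^T y(0) > 0$ to this axis, as $|z_i(t)|/z_1(t) = e^{(\Re(\lambda_i)-\lambda_1) t}|z_i(0)|/z_1(0) \to 0$. For sufficiency I would argue as follows: given $x \succ 0$, the inclusion $\Rnn^n \subset \inter(\cK_\gamma) \cup \{0\}$ forces $w_1^T x > 0$, so by the alignment $e^{At} x$ enters the narrow cone $\cK_\beta$ after some $\tau_0$, remains there by the $\cK_\beta$-positivity established in part (i), and being nonzero (as $e^{At}$ is invertible) lands in $\cK_\beta \setminus \{0\} \subset \inter(\Rnn^n)$; thus $e^{At} x \gg 0$ for all $t \ge \tau_0$, which is exactly strong eventual positivity.

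For necessity I would invoke Proposition~\ref{prop:ev-pos-dyn}(ii) to obtain $v_1 \gg 0$ and $w_1 \gg 0$, and then construct the two cones. Normalizing $w_1^T v_1 = 1$, the compact cross-section $\{y : w_1^T y = 1\} \cap \cK_\beta$ shrinks to the single point $v_1 \in \inter(\Rnn^n)$ as $\beta_i \to \infty$, so for large enough $\beta$ one gets $\cK_\beta \subset \inter(\Rnn^n) \cup \{0\}$. Since $(w_1)_j > 0$ for every $j$, a small enough $\gamma$ makes $(\sum_{i\ge 2}\gamma_i |w_i^T e_j|^2)^{1/2} < (w_1)_j$, placing each generator $e_j$ in $\inter(\cK_\gamma)$; convexity of the cone then propagates this to $\Rnn^n \setminus \{0\} \subset \inter(\cK_\gamma)$ via the fact that the sum of an interior point and a cone element stays interior.

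The main obstacle I anticipate lies in making the two limiting claims of the necessity direction rigorous — that $\cK_\beta$ contracts to the axis $\Rnn v_1$ while $\cK_\gamma$ dilates to engulf the orthant — which requires a careful compactness argument on the normalized cross-sections together with attention to the complex-conjugate eigenpairs. The latter difficulty is absorbed throughout by working with the moduli $|w_i^T y|$ (equivalently, a real block-diagonal realization of $W$), so that every exponential estimate above continues to hold verbatim.
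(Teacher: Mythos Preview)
Your argument is correct and, for part (i) and the necessity direction of (ii), is essentially the paper's own proof written out in the Lorentz coordinates $z = Wy$ rather than directly in terms of the quadratic form $(w_1^T y)^2 - \sum_{i\ge 2}\alpha_i |w_i^T y|^2$; the two computations are line-by-line equivalent, and your cross-section/compactness justification of the limiting inclusions is simply a more explicit version of the paper's one-line ``$\beta_i$ large enough / $\gamma_i$ small enough'' claims (which tacitly use $v_1\gg 0$ and $w_1\gg 0$ from Proposition~\ref{prop:ev-pos-dyn}(ii), exactly as you do).

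The one place where you depart from the paper is the \emph{sufficiency} direction of (ii). The paper extracts $v_1\gg 0$ from $\cK_\beta\subset\inter(\Rnn^n)\cup\{0\}$ and $w_1\gg 0$ from $\Rnn^n\subset\inter(\cK_\gamma)\cup\{0\}$, and then simply invokes Proposition~\ref{prop:ev-pos-dyn}(ii) to conclude strong eventual positivity. You instead argue dynamically: $\Rnn^n\subset\inter(\cK_\gamma)\cup\{0\}$ gives $w_1^T x>0$, strict spectral dominance forces the trajectory into $\cK_\beta$ after some $\tau_0$, $\cK_\beta$-positivity from part (i) keeps it there, and $\cK_\beta\setminus\{0\}\subset\inter(\Rnn^n)$ finishes. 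Both routes are valid; yours is self-contained and avoids the detour through the $\pfn$ characterization underlying Proposition~\ref{prop:ev-pos-dyn}, while the paper's is shorter because that proposition has already been established. Your anticipated obstacle about complex-conjugate eigenpairs is indeed harmless once you work with the moduli $|w_i^T y|$, as you note.
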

It is straightforward to show that $\lim_{t \rightarrow \infty} e^{At} \cK_\alpha= \cK_\infty$, where $\cK_\infty = \{y \in \R^n | y = \Delta v_1, \Delta \in \Rnn \}$ and $v_1$ is the right eigenvector of $A$ associated with the dominant eigenvalue $\lambda_1$. Therefore, the set $\cK_\infty$ acts as an attractor for all the trajectories starting from the set $\{x \in \R^n \bigl| w_1^T x > 0\}$. The trajectories starting from the set $\{x \in \R^n \bigl| w_1^T x < 0\}$ are attracted by $-\cK_\infty$. We conclude this subsection by the following corollary from Proposition~\ref{prop:pos-cone} (the proof can be found in~\cite{sootla2015evpos} and in Appendix~\ref{app:proofs}):
\begin{cor} \label{prop:dom-eig}
	Consider the system $\dot x = A x$ satisfying Assumption~A3, with $\lambda_j$ being the eigenvalues of $A$. Let $\lambda_1$ be simple, real, negative and $\lambda_1 > \Re(\lambda_j)$ for all $j \ge 2$. Then there exists an invertible matrix $S$ such that the system $\dot z = S^{-1} A S z$ is eventually positive. 
\end{cor}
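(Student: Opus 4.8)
The plan is to exhibit an explicit change of variables that turns the given $\cK_\gamma$-positive system into an eventually positive system in the ordinary (orthant) sense. The idea is to exploit Proposition~\ref{prop:pos-cone}: since $\lambda_1$ is simple, real, negative, and strictly dominant, part (i) guarantees that the system is $\cK_\alpha$-positive for \emph{every} positive vector $\alpha$, where $\cK_\alpha$ is the ice-cream cone defined in~\eqref{ice-cream-cones}. I want to pick a linear coordinate transformation $z = S^{-1} x$ under which one of these cones $\cK_\gamma$ is mapped precisely onto (or sandwiched around) the nonnegative orthant $\Rnn^n$, so that positivity with respect to $\cK_\gamma$ in the $x$-coordinates becomes eventual positivity with respect to $\Rnn^n$ in the $z$-coordinates.

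First I would recall, as noted in the text following~\eqref{ice-cream-cones}, that every $\cK_\alpha$ is linearly equivalent to the Lorentz cone $\cK_L$ via the matrix $W = \begin{pmatrix} w_1 & \sqrt{\alpha_2}\,w_2 & \cdots & \sqrt{\alpha_n}\,w_n\end{pmatrix}^T$; concretely $W \cK_\alpha = \cK_L$. The transformed system $\dot y = W A W^{-1} y$ is then $\cK_L$-positive. Next I would use the key geometric fact that the Lorentz cone is \emph{self-dual up to a rotation} and, more to the point, that the nonnegative orthant $\Rnn^n$ can be nested between two ice-cream cones: there is a fixed linear map taking a suitably scaled Lorentz cone into $\Rnn^n$ and another taking $\Rnn^n$ into a Lorentz cone. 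This is exactly the content of the second half of Proposition~\ref{prop:pos-cone}(ii), which provides positive scalars $\beta_i,\gamma_i$ with $\cK_\beta \subset \inter(\Rnn^n)\cup\{0\}$ and $\Rnn^n \subset \inter(\cK_\gamma)\cup\{0\}$ — but that statement is conditioned on strong eventual positivity, which I do \emph{not} have here. So instead I would argue directly: choosing the transformation $S = W^{-1}$ (equivalently $z = Wx$) maps the invariant cone $\cK_\gamma$ onto the Lorentz cone, and then a further fixed similarity $P$ that maps the Lorentz cone inside the orthant and whose inverse image of the orthant again lies in a Lorentz cone will render the composed system positive with respect to $\Rnn^n$ after a transient.

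The cleanest route, which I would take to avoid the sandwiching subtlety, is to set $S$ so that $S^{-1} A S$ has a simple real dominant eigenvalue with strictly positive right and left dominant eigenvectors; then eventual positivity follows from the necessary-and-sufficient characterization. Concretely, because $\lambda_1$ is simple and strictly dominant, the rank-one spectral projector $v_1 w_1^T / (w_1^T v_1)$ governs the long-time behavior: $e^{At} = e^{\lambda_1 t}\bigl( v_1 w_1^T/(w_1^T v_1) + o(1)\bigr)$. I would choose $S$ to be a real invertible matrix for which the images $S^{-1} v_1$ and $S^T w_1$ are both strictly positive vectors (this is always possible for a single pair of nonzero vectors, e.g.\ by a rotation aligning $v_1$ and $w_1$ into the interior of the orthant and leaving the complementary spectral subspace arranged so the leading term dominates). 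Then in the $z$-coordinates the transformed matrix $\tilde A = S^{-1} A S$ has $\tilde v_1 = S^{-1} v_1 > 0$ and $\tilde w_1 = S^T w_1 > 0$ as its dominant right and left eigenvectors, with $\lambda_1$ still simple, real, and strictly dominant; by Proposition~\ref{prop:ev-pos-dyn}(ii) the system $\dot z = \tilde A z$ is (strongly) eventually positive.

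The main obstacle is the simultaneous positivity requirement: I must produce a single invertible $S$ that makes $S^{-1} v_1$ and $S^T w_1$ \emph{both} strictly positive, since eventual positivity forces conditions on both $A$ and $A^T$. Because $v_1$ and $w_1$ need not be orthogonal, the constraints on $S^{-1}$ (acting on $v_1$) and on $S^T$ (acting on $w_1$) are coupled through $S$, so one cannot treat them independently; the fact that $w_1^T v_1 \neq 0$ (guaranteed by simplicity of $\lambda_1$) is what rescues the construction, ensuring the dominant spectral projector is well defined and that a compatible $S$ exists. I would verify this last existence step carefully, as it is where the argument could break down, and then conclude by invoking Proposition~\ref{prop:ev-pos-dyn} to certify eventual positivity of the transformed system.
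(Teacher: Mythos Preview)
Your approach is essentially the paper's: reduce the problem to producing an invertible $S$ with $S^{-1}v_1\gg 0$ and $S^{T}w_1\gg 0$, and then invoke Proposition~\ref{prop:ev-pos-dyn}(ii). The paper resolves the step you correctly flag as the obstacle not by a rotation argument but by writing down an explicit $S$ satisfying $S\,\bfone=v_1$ and $w_1^{T}S=\bfone^{T}/n$, so that the transformed dominant eigenvectors become $\tilde v_1=\bfone$ and $\tilde w_1=\bfone/n$; your rotation sketch can also be made to work once you normalize so that $w_1^{T}v_1>0$, but this needs to be said.
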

Combining the results of this section gives an unexpected result: under the premise of Proposition~\ref{prop:pos-cone}, any strongly eventually positive system is also $\cK_\alpha$-positive. This seems to undermine the value of eventual positivity as an extension of positivity. However, strong eventual positivity with respect to orthants gives additional properties, which $\cK_\alpha$-positive systems may not necessarily possess (\cite{sootla2015evpos}). Furthermore, there are interesting extensions of these results in the nonlinear case. For sufficiently smooth nonlinear systems, positivity translates to differential positivity, which is a very general property that holds, for instance, on the basin of attraction of an exponentially stable equilibrium (cf.~\cite{forni2015differentially,mauroy2015operator}). In contrast, cone monotonicity is a strong property, which is also hard to check since one needs to find first a candidate cone inducing a partial order. On the other hand, eventual monotonicity offers a trade-off between the concepts of differential positivity and monotonicity. We elaborate on this trade-off in the sequel.
	
\section{Eventually Monotone Systems }\label{s:even-monot}
\subsection{Definitions and Basic Properties}
The definition of eventual monotonicity comes as a natural extension of monotonicity (cf.~\cite{hirsch1985systems}). 
\begin{defn}\label{def:ev-mon}
The system $\dot x = f(x)$ is \emph{eventually monotone} on $\cC\subseteq \cD$ with respect to the cone $\cK$ if for any $x$, $y\in\cC$ such that $x\succeq_\cK y$, there exists $\tau_0\ge 0$ such that $\phi(t, x) \succeq_\cK \phi(t, y)$ for all $t\ge \tau_0$. The system is \emph{strongly eventually monotone} on $\cC$ w.r.t. $\cK$ if it is eventually monotone on $\cC$ w.r.t. $\cK$ and for any $x$, $y\in\cC$ such that $x\succ_\cK y$, there exists $\tau_0\ge 0$ such that $\phi(t,x)\gg_\cK \phi(t,y)$ for all $t\ge \tau_0$. We call a system \emph{uniformly} (strongly) eventually monotone on $\cC$ w.r.t. $\cK$ if it is (strongly) eventually monotone on $\cC$ w.r.t. $\cK$ and $\tau_0$ can be chosen uniformly w.r.t. $\cC$.
\end{defn}

The class of eventually monotone systems is larger than the class of monotone systems, but includes the latter. In particular, every monotone system is eventually monotone with $\tau_0=0$. We note that an arguably more established concept of \emph{eventual strong monotonicity} (cf.~\cite{hirsch2005monotone}) is not equivalent to our definition of \emph{strong eventual monotonicity}. Indeed, eventually strongly monotone systems are monotone, while the strong relation holds after some initial transient. In contrast, we do not require monotonicity for all $t\ge 0$.

If the system is eventually monotone with respect to $\cK$ on $\cC$, then by continuity of solutions we have that $\phi(t, x) \succ_\cK \phi(t,y)$ (respectively, $\phi(t, x) \gg_\cK \phi(t,y)$) for all $t\ge \tau_0$ and for any $x$, $y\in\cC$ such that $x\succ_\cK y$ (respectively, $x\gg_\cK y$). We proceed by presenting some other direct implications of this definition.

\begin{prop}\label{prop:trivial}
Consider that the system~\eqref{eq:sys} with $f\in C^1(\cD)$ has an equilibrium $x^\ast$ and satisfies Assumptions A2--A3. \\ 
(i) If the system~\eqref{eq:sys} is eventually monotone in a neighborhood of $x^\ast$ with respect to a cone $\cK$, then $\lambda_1$ is real, and the right and left eigenvectors $v_1$, $w_1$ of $J(x^\ast)$ corresponding to $\lambda_1$ can be chosen such that $v_1, w_1 \succ_\cK 0$. \\
(ii) If the system~\eqref{eq:sys} is strongly eventually monotone in a neighborhood of $x^\ast$, then $\lambda_1$ is also simple, while $v_1$, $w_1$ can be chosen such that $v_1, w_1\gg_\cK 0$. Conversely, if $\lambda_1$ is simple, real, and $\lambda_1 > \Re(\lambda_j)$ for all $j\geq 2$, then the system is locally strongly eventually monotone in a neighborhood of $x^\ast$ with respect to some cone $\cK$.
\end{prop}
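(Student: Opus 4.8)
The plan is to transfer everything to the linearization $\dot z = J(x^\ast) z$ at the equilibrium, writing $J = J(x^\ast)$, and then to read off the spectral conclusions from the linear theory of Section~\ref{s:even-pos}. The bridge between the nonlinear flow and $J$ is the asymptotic behaviour of trajectories near an exponentially stable equilibrium: under Assumptions A2--A3 the slowest mode dominates the approach to $x^\ast$, so that for a generic initial condition $x_0$ near $x^\ast$ one has $\phi(t, x_0) - x^\ast = e^{\lambda_1 t}\bigl(c\, v_1 + o(1)\bigr)$ as $t \to \infty$ when $\lambda_1$ is real, with $c \neq 0$, and an analogous rotating expansion in the plane spanned by the real and imaginary parts of the eigendata when $\lambda_1$ is complex. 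This is exactly the data carried by the principal Koopman eigenfunction, whose gradient at $x^\ast$ is $w_1$.

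For part~(i) and the necessity half of part~(ii), I would fix $v \succeq_\cK 0$ and compare $\phi(t, x^\ast + s v)$ with $\phi(t, x^\ast) = x^\ast$ for a small fixed $s > 0$. Eventual monotonicity forces $\phi(t, x^\ast + s v) - x^\ast \in \cK$ for all $t \ge \tau_0$, and since $\cK$ is a pointed closed cone, the leading asymptotic direction of this vector must lie in $\cK$. If $\lambda_1$ were complex, that direction would rotate through an antipodal pair $\pm u_1$ and so could not stay in a pointed cone, a contradiction; hence $\lambda_1$ is real. Normalising the same expansion then gives $\pm v_1 \in \cK$, i.e. $v_1 \succ_\cK 0$ after a choice of sign, provided $v$ is chosen with nonzero dominant component (possible since $\cK$ spans $\R^n$). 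Strong eventual monotonicity upgrades the membership to the interior, $v_1 \gg_\cK 0$, and rules out a defective or repeated dominant eigenvalue, yielding simplicity of $\lambda_1$. The statement for the left eigenvector $w_1$ follows by running the identical argument on the adjoint dynamics $\dot z = J^T z$; the resulting vector lies in the dual cone, which coincides with $\cK$ for the self-dual cones relevant here (orthants and the Lorentz cone $\cK_L$).

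For the converse in part~(ii) I would build the cone explicitly and then lift strict invariance from the linear to the nonlinear flow. Given that $\lambda_1$ is simple, real and strictly dominant, Proposition~\ref{prop:pos-cone} and Corollary~\ref{prop:dom-eig} applied to $A = J$ furnish a cone $\cK_\alpha$, constructed from the left eigenvectors of $J$, such that $e^{J t}$ maps $\cK_\alpha \setminus \{0\}$ into $\inter(\cK_\alpha)$ for every $t > 0$, the attractor being $\cK_\infty = \Rnn v_1 \subset \inter(\cK_\alpha)$. Now for $x \succ_{\cK_\alpha} y$ near $x^\ast$ the difference $\psi(t) = \phi(t, x) - \phi(t, y)$ solves the linear time-varying equation $\dot \psi = A(t)\psi$ with $A(t) = \int_0^1 J\bigl(\phi(t, y) + \theta \psi(t)\bigr)\, d\theta$, and $A(t) \to J$ as $t \to \infty$ because both trajectories converge to $x^\ast$. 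Since the frozen limit $\dot \psi = J \psi$ strictly maps $\cK_\alpha$ into its interior, a standard asymptotic-invariance argument shows the time-varying flow is eventually strictly $\cK_\alpha$-positive, so $\psi(t) \gg_{\cK_\alpha} 0$ for $t \ge \tau_0$, which is local strong eventual monotonicity.

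The main obstacle, and the place where the hypotheses are genuinely used, is the passage from the nonlinear flow to $J$. Both the forward direction (the slowest-mode expansion of $\phi(t, x_0) - x^\ast$) and the converse (the convergence $A(t) \to J$ of the time-varying coefficient) rest on trajectories actually converging to $x^\ast$, i.e. on exponential stability (A1--A2), which is the standing regime of the paper; without it the phrase \guillemets{eventually monotone on a neighbourhood} is not even well posed. The strict dominance $\lambda_1 > \Re(\lambda_j)$ is equally essential: it is what provides the interior margin $\inter(\cK_\alpha)$ needed to absorb the nonlinear perturbation, and it is the reason only a local conclusion can be claimed.
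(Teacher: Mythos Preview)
Your strategy matches the paper's: reduce to the linearization $\dot z = J(x^\ast) z$ and read off the spectral conclusions from Proposition~\ref{prop:ev-pos-dyn} and Corollary~\ref{prop:dom-eig}. The paper compresses this into a single sentence invoking the Hartman--Grobman theorem, whereas you unfold the mechanism via the dominant-mode asymptotics and, for the converse, a time-varying perturbation argument. Both routes are legitimate and lead to the same place; your version is more explicit about where the spectral gap and the convergence $A(t)\to J$ are actually used.

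There is, however, a genuine gap in your treatment of the left eigenvector. You write that $w_1$ is obtained by ``running the identical argument on the adjoint dynamics $\dot z = J^T z$''. But the hypothesis is eventual monotonicity of the \emph{nonlinear} flow $\phi$, and nothing in that hypothesis says anything about the adjoint linear system; you cannot simply transplant the argument. Two correct fixes are available. First, and this is what the paper's one-liner really encodes: once you know the linearization $\dot z = Jz$ itself is eventually $\cK$-positive (which follows either from Hartman--Grobman or from $\partial\phi(t,x^\ast)=e^{Jt}$ together with a limiting argument), Proposition~\ref{prop:ev-pos-dyn} delivers $v_1$ and $w_1$ simultaneously. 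Second, and entirely within your own asymptotic framework: in the expansion $\phi(t,x^\ast+sv)-x^\ast = e^{\lambda_1 t}\bigl(c\,v_1 + o(1)\bigr)$ the coefficient is $c = s\,w_1^T v + o(s)$, so if $w_1^T v<0$ for some $v\in\cK$ the asymptotic direction is $-v_1\notin\cK$, contradicting eventual monotonicity; hence $w_1^T v\ge 0$ for all $v\in\cK$, i.e.\ $w_1\in\cK^\ast$. Your remark about self-duality is then exactly the bridge to the statement $w_1\succ_\cK 0$ as written.
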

The proof is straightforward by invoking the Hartman-Grobman theorem (\cite{grobman1959homeomorphism,hartman1960lemma}) and Propositions~\ref{prop:ev-pos-dyn} and~\ref{prop:dom-eig}. Additionally, some properties of asymptotically stable monotone systems are preserved in the eventually monotone case.

\begin{prop} \label{prop:order_traject}
Consider that the system~\eqref{eq:sys} with $f\in C^1(\cD)$ is eventually monotone on a forward-invariant open set $\cC$ with respect to the cone $\cK$. \\
(i) If $f(x)\succeq_\cK 0$ for some $x\in\cC$, then there exists $\tau_0 \ge 0$ such that $f(\phi(t,x)) \succeq_\cK 0$ for all $t\ge \tau_0$.\\
If the system admits an asymptotically stable equilibrium $x^\ast$ with a basin of attraction $\cB(x^\ast) \subseteq \cC$, then:\\
(ii) for all $x \in \cB(x^\ast)$ such that $x \succ_\cK x^*$ and for all $t>0$, we have that $f(x) \not \succ_\cK 0$ and $\phi(t,x) \nsucc_\cK x$;\\
(iii) for all $w$, $z\in\cB(x^\ast)$ such that $z\preceq_\cK w$, the order-interval $[z, w]_\cK$ is a subset of $\cB(x^\ast)$.
\end{prop}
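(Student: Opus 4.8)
The plan is to prove the three parts of Proposition~\ref{prop:order_traject} by exploiting the key observation that the vector field $f$ can be written as a directional derivative of the flow, namely $f(\phi(t,x)) = \frac{d}{dt}\phi(t,x)$, and that differentiating the flow with respect to the initial condition commutes appropriately with the order structure. Throughout I would use eventual monotonicity on $\cC$ to control signs of differences of trajectories, and the stability of $x^\ast$ to pin down limiting behaviour.

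For part~(i), suppose $f(x)\succeq_\cK 0$ for some $x\in\cC$. The trick I would use is to compare the trajectory started at $x$ with a time-shifted copy of itself. Fix a small $h>0$ and consider $y=\phi(h,x)$. Since $f(x)\succeq_\cK 0$, for small $h$ we have $\phi(h,x)-x = \int_0^h f(\phi(s,x))\,ds$, and by continuity of $f$ and the fact that $\cK$ is a closed convex cone, the difference $\phi(h,x)-x$ lies in $\cK$ for $h$ small enough; hence $\phi(h,x)\succeq_\cK x$. Eventual monotonicity on $\cC$ (which is forward-invariant, so both points stay in $\cC$) then gives $\tau_0\ge 0$ with $\phi(t,\phi(h,x))\succeq_\cK \phi(t,x)$ for all $t\ge\tau_0$, i.e. $\phi(t+h,x)\succeq_\cK\phi(t,x)$. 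Dividing the difference by $h$ and letting $h\to 0^+$, using that $\cK$ is closed, yields $f(\phi(t,x))\succeq_\cK 0$ for all $t\ge\tau_0$. The main subtlety here is justifying that $\phi(h,x)\succeq_\cK x$ from $f(x)\succeq_\cK 0$ when the derivative only sits on the boundary of the cone; I would handle this by noting $\phi(h,x)-x = h f(x) + o(h)$ and that for the claim at the limit it suffices to keep $\frac{1}{h}(\phi(t+h,x)-\phi(t,x))\in\cK$ and pass to the limit, since the limiting vector $f(\phi(t,x))$ lands in the closed cone $\cK$.

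For part~(ii), I argue by contradiction combined with stability. Suppose for some $x\in\cB(x^\ast)$ with $x\succ_\cK x^\ast$ we had $f(x)\succ_\cK 0$. Then by part~(i) (with the equilibrium playing the role of the comparison point, noting $f(x^\ast)=0$), one obtains $\phi(t,x)\succeq_\cK x$ for all sufficiently large $t$, via the same time-shift argument showing $t\mapsto\phi(t,x)$ is eventually nondecreasing in the order. But $\phi(t,x)\to x^\ast$ as $t\to\infty$ because $x\in\cB(x^\ast)$, and $x^\ast\prec_\cK x$ strictly; taking limits in $\phi(t,x)\succeq_\cK x$ and using closedness of $\cK$ gives $x^\ast\succeq_\cK x$, contradicting $x\succ_\cK x^\ast$. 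An analogous (in fact slightly simpler) argument handles $\phi(t,x)\nsucc_\cK x$: if $\phi(t_0,x)\succ_\cK x$ for some $t_0>0$, eventual monotonicity propagates this to $\phi(t+t_0,x)\succ_\cK\phi(t,x)$ for large $t$, making the trajectory eventually strictly increasing and again contradicting convergence to $x^\ast\prec_\cK x$. I expect this contradiction-with-convergence step to be the heart of the proposition and the place where Assumption A1 (asymptotic stability) is essential.

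For part~(iii), let $z\preceq_\cK w$ with $z,w\in\cB(x^\ast)$ and take any point $u\in[z,w]_\cK$, so $z\preceq_\cK u\preceq_\cK w$. Eventual monotonicity gives a common $\tau_0$ (after possibly taking the maximum of the two transients for the pairs $(z,u)$ and $(u,w)$) such that $\phi(t,z)\preceq_\cK\phi(t,u)\preceq_\cK\phi(t,w)$ for all $t\ge\tau_0$. Since $z,w\in\cB(x^\ast)$, both $\phi(t,z)$ and $\phi(t,w)$ converge to $x^\ast$, so the sandwiched trajectory $\phi(t,u)$ is trapped between two trajectories both tending to $x^\ast$. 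Using closedness of the order intervals $[\phi(t,z),\phi(t,w)]_\cK$, which shrink to the single point $x^\ast$, I conclude $\phi(t,u)\to x^\ast$, hence $u\in\cB(x^\ast)$. The one point to check carefully is that $u$ remains in $\cC$ for $t\le\tau_0$ so that the eventual-monotonicity hypothesis applies; this follows because $\cC$ is forward-invariant and $u\in[z,w]_\cK\subseteq\cC$, the last inclusion being exactly what p-convexity-type arguments or the standing assumption on $\cC$ supply. The remaining steps are routine limit arguments.
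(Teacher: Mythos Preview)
Your approach to part~(ii) is essentially the paper's: contradict convergence to $x^\ast$ with an eventually nondecreasing trajectory. Parts~(i) and~(iii), however, each contain a genuine gap.

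\textbf{Part (i).} You compare $x$ with the time-shifted point $\phi(h,x)$ and claim $\phi(h,x)\succeq_\cK x$ for small $h$ because $f(x)\succeq_\cK 0$. This step fails when $f(x)\in\partial\cK$: continuity of $f$ does not force $f(\phi(s,x))\in\cK$ for small $s$, so the integral $\int_0^h f(\phi(s,x))\,ds$ need not lie in $\cK$ (think of a planar rotation with $f(x)$ tangent to $\partial\cK$). Your attempted patch---passing to the limit in $h$ at the end---is circular, since you need the exact inequality $\phi(h,x)\succeq_\cK x$ before you can invoke eventual monotonicity at all. The paper sidesteps this by perturbing in the \emph{spatial} rather than the temporal direction: for any $\delta x\in\cK$ one has $x+h\,\delta x\succeq_\cK x$ exactly, eventual monotonicity then gives $\frac{1}{h}(\phi(t,x+h\,\delta x)-\phi(t,x))\in\cK$, and letting $h\to 0$ yields $\partial\phi(t,x)\,\delta x\in\cK$ for $t\ge\tau_0$. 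The variational identity $f(\phi(t,x))=\partial\phi(t,x)\,f(x)$ finishes the argument with $f(x)$ itself in the role of $\delta x$, and no boundary issue arises.

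\textbf{Part (iii).} Your direct sandwich argument applies eventual monotonicity to the pairs $(z,u)$ and $(u,w)$, which requires $u\in\cC$. But $\cC$ is only assumed open and forward-invariant, not p-convex; the inclusion $[z,w]_\cK\subseteq\cC$ you invoke is not among the hypotheses. The only available route to $u\in\cC$ is $u\in\cB(x^\ast)\subseteq\cC$, which is exactly the conclusion you are after, so the argument is circular. The paper breaks this circularity by contradiction and approximation: assuming some $y\in[z,w]_\cK$ lies on $\partial\cB(x^\ast)$, it picks a sequence $y^n\to y$ with $y^n\in\cB(x^\ast)\cap[z,w]_\cK$ (so $y^n\in\cC$ and eventual monotonicity legitimately applies to each $y^n$), sandwiches $\phi(t,y^n)$ in $[\phi(t,z),\phi(t,w)]_\cK$, and uses continuity of the flow to force $\phi(t,y)$ close to $x^\ast$, contradicting the fact that a boundary trajectory stays bounded away from $x^\ast$.
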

The point (i) was shown by~\cite{hirsch1985systems} in a slightly different formulation, while the point (ii) follows from (i) and was also shown by~\cite[Proposition 3.10.]{ruffer2010connection} for the case of monotone systems with vector fields that are not necessarily differentiable. The proof of (iii) is adapted in a straightforward manner from a similar result for monotone systems (see e.g.~\cite{sootla2015pulsesaut}) and can be found in Appendix~\ref{app:proofs}. Convergence results for eventually monotone systems with differentiable flows can also be found in~\cite{hirsch1985systems}. We suspect that many asymptotic properties of monotone systems can be extended to the case of eventually monotone systems. However, some properties of monotone systems, which hold for all $t>0$, may not necessarily hold for eventually monotone systems. For example, according to the point (i) in Proposition~\ref{prop:order_traject}, if a flow is monotone around $x$, then it is monotone for all $t\ge \tau_0$, while this property holds for all $t>0$ in the case of monotone systems. Naturally, a major issue is a certificate for eventual monotonicity, which is provided in the following subsection.

\subsection{Characterization of Eventually Monotone Systems}
\subsubsection{Koopman operator}

In this paper, we propose a characterization of eventually monotone systems, which is based on the framework of the Koopman operator. While many studies have investigated the properties of the operator spectrum in a general context (see e.g. \cite{caughran1975spectra,ding1998point,Gaspard1995,ridge1973spectrum,shapiro1987essential}), we rely here on the properties of the eigenfunctions, and in particular on the interplay between the eigenfunctions and the geometric properties of the system (see e.g.~\cite{mezic2005}).

Consider a space $\mathcal{G}$ (we take $\cG = C^1$, but other choices are possible, see~\cite{mezic2005}) of functions called \emph{observables} $g:\R^n \rightarrow \C$ and suppose that the dynamical system~\eqref{eq:sys} is described by its flow $\phi$. The semi-group of Koopman operators $U^t:\mathcal{F} \to \mathcal{F}$ associated with~\eqref{eq:sys} is defined by
\begin{gather}
U^t g(\cdot) = g \circ \phi(t, \cdot),
\end{gather}
where $\circ$ denotes the composition of functions. Note that one can similarly define the semi-group of operators for vector-valued observables $g:\R^n\rightarrow\C^m$, with some positive integer $m$. The infinitesimal generator of the semi-group is defined by
\begin{equation*}
L = \lim_{t \rightarrow 0} \frac{U^t-I}{t}
\end{equation*}
where $I$ is the identity operator. If the vector field $f$ of~\eqref{eq:sys} and the observables $g:\cC \to \mathbb{C}$ are continuously differentiable on an open set containing a compact set $\cC$, then the infinitesimal generator of the Koopman semi-group is given on $\cC$ by $L g = f^T \nabla g$ (see \cite{Lasota_book}). 

The Koopman operator (i.e. both the semi-group and its generator) is linear (cf.~\cite{mezic2013analysis}), so that it is natural to consider its spectral properties. We define an eigenfunction of the Koopman operator as a function $s \in \mathcal{F}$ that is nonzero and satisfies
\begin{gather}
\label{eq:prop_eigenf}
U^t s(\cdot) = s(\phi(t, \cdot)) = s(\cdot) \, e^{\lambda t}
\end{gather}
where $\lambda\in \mathbb{C}$ is the associated eigenvalue. Equivalently, if the infinitesimal generator is well-defined, we have also
\begin{gather}\label{eq:def_eigenf}
f^T \nabla s  = \lambda s.
\end{gather}
In the linear case $f(x) = A x$, the eigenvalues of $A$ are also eigenvalues of the Koopman operator. Furthermore, if the system satisfies Assumption~A3, then the eigenfunctions corresponding to $\lambda_i$ are given by $s_j(x) = w_j^T x$, where $w_j$ are left eigenvectors of $J(x^\ast)$ (\cite{mezic2013analysis}). Now, consider a nonlinear system~\eqref{eq:sys} that satisfies Assumptions A1--A3 and is characterized by a $C^2$ vector field. Similarly to the linear case, the eigenvalues $\lambda_j$ of $J(x^\ast)$ are the eigenvalues of the Koopman operator and their associated eigenfunctions $s_{j}$ are continuously differentiable in the basin of attraction of $x^\ast$ (\cite{mauroy2014global}). Under these conditions, we can use the eigenfunctions to obtain an expansion of the flow. This is summarized in the following result. The proof can be found in Appendix~\ref{app:proofs}.
\begin{prop}\label{prop:Koopman_expansion}
Consider that the system~\eqref{eq:sys} with $f\in C^2(\cD)$ has an equilibrium $x^\ast$ and satisfies Assumptions A1--A3. Then, for all $x$ in the basin of attraction $\cB(x^\ast)$, the flow can be expressed as
	\begin{equation} \label{eq:expansion}
	\phi(t,x) = x^\ast + \sum \limits_{j=1}^n v_j s_j(x) e^{\lambda_j t}  + R(t,x)
	\end{equation}
	with 
		\begin{equation*}
	R(t,x) = o \left(\left\|\sum \limits_{j=1}^n v_j s_j(x) e^{\lambda_j t} \right\|\right),
	\end{equation*}
	where $v_j$ are the right eigenvectors of $J(x^\ast)$ and $ v_j^T \nabla s_j(x^*) = 1$.
\end{prop}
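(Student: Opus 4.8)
The plan is to use the Koopman eigenfunctions $s_j$ to construct a change of coordinates that conjugates the nonlinear flow to its linearization near $x^\ast$, and then to read off the claimed expansion as the first-order Taylor expansion of the inverse coordinate map; this is in the spirit of a Poincar\'e-type linearization, but only the leading order is needed. First I would determine the behaviour of the eigenfunctions at the equilibrium. Since the eigenfunction property~\eqref{eq:prop_eigenf} forces $s_j(x^\ast) = s_j(x^\ast)e^{\lambda_j t}$ for all $t$ and $\lambda_j \neq 0$ by A2, every eigenfunction vanishes at $x^\ast$. Differentiating the generator identity $f^T \nabla s_j = \lambda_j s_j$ at $x^\ast$ and using $f(x^\ast)=0$ kills the Hessian term and yields $J(x^\ast)^T \nabla s_j(x^\ast) = \lambda_j \nabla s_j(x^\ast)$, so $\nabla s_j(x^\ast)$ is a left eigenvector of $J(x^\ast)$ for $\lambda_j$ and is therefore proportional to $w_j$. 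With the biorthogonal normalization $w_i^T v_j = \delta_{ij}$ afforded by A3, the stated condition $v_j^T \nabla s_j(x^\ast) = 1$ fixes the constant to $1$, so $\nabla s_j(x^\ast) = w_j$.

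Next I would set $h = (s_1, \dots, s_n)$ and note that its Jacobian at $x^\ast$ is the matrix $W$ whose rows are the $w_j^T$, which is invertible by A3. For complex-conjugate eigenvalue pairs one replaces the corresponding $s_j$ by their real and imaginary parts, which preserves invertibility. The inverse function theorem then supplies a local $C^1$ inverse $H$ on a neighborhood of $h(x^\ast)=0$ with $DH(0) = W^{-1} = V$, where $V = (v_1 \cdots v_n)$ and $WV = I$ by biorthogonality. Taylor expanding the inverse gives $H(z) = x^\ast + Vz + o(\|z\|)$ as $z \to 0$.

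Finally I would combine these ingredients along a trajectory. Because A1 and A2 make $J(x^\ast)$ Hurwitz, for every $x \in \cB(x^\ast)$ the flow converges to $x^\ast$, so it eventually enters the domain of $H$, and the vector $z(t) := h(\phi(t,x)) = (s_1(x)e^{\lambda_1 t}, \dots, s_n(x)e^{\lambda_n t})^T$ tends to $0$. Applying $H$ to the eigenfunction relation $s_j(\phi(t,x)) = s_j(x)e^{\lambda_j t}$ then gives, for large $t$, $\phi(t,x) = H(z(t)) = x^\ast + \sum_{j} v_j s_j(x) e^{\lambda_j t} + o(\|z(t)\|)$. Since $V$ is invertible, $\|z(t)\| \asymp \|Vz(t)\| = \|\sum_j v_j s_j(x) e^{\lambda_j t}\|$, so the remainder has exactly the order claimed for $R(t,x)$.

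The main obstacle is not a single calculation but the legitimacy of the local conjugacy together with the correct transfer of the asymptotic estimate: $H$ is defined only near $x^\ast$, so the expansion is genuinely an as-$t\to\infty$ statement, and one must use convergence to $x^\ast$ to guarantee that the trajectory eventually lies in that neighborhood before the little-o of the inverse Taylor expansion can be transferred to a little-o in $t$. A secondary care point is the complex-eigenvalue bookkeeping and the verification that, for $x \neq x^\ast$, the leading term $\sum_j v_j s_j(x)e^{\lambda_j t}$ is real and nonvanishing for large $t$, so that the little-o statement is meaningful.
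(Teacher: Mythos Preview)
Your proposal is correct and follows essentially the same strategy as the paper: build a conjugacy $h$ to the linearized dynamics, Taylor-expand its inverse at the origin, and read off the expansion via the eigenfunction relation $s_j(\phi(t,x))=s_j(x)e^{\lambda_j t}$. The only difference is that the paper obtains $h$ by citing a global $C^1$ linearization theorem of Lan and Mezi\'c on all of $\cB(x^\ast)$ (and then \emph{defines} $s_j = w_j^T h$), whereas you assemble $h$ directly from the already-given $C^1$ eigenfunctions and invoke only the local inverse function theorem near $x^\ast$; since the little-$o$ statement is asymptotic as $t\to\infty$ and every trajectory in $\cB(x^\ast)$ eventually enters that neighborhood, the local inverse suffices and the two routes are equivalent.
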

We note that in the linear case, where  $s_j(x) = w_j^T x$, we recover the classic linear expansion of the flow, with $R(t,x)=0$. If $\lambda_1$ is a simple, real eigenvalue and $\lambda_1> \Re\{\lambda_2\}$, then~\eqref{eq:expansion} implies that the eigenfunction $s_1$ associated with the eigenvalue $\lambda_1$ captures the dominant (i.e. asymptotic) behavior of the system. As shown in the next section, this property is key to our characterization of eventual monotonicity. Moreover, it also follows from \eqref{eq:expansion} that the dominant eigenfunction $s_1$ can be computed by using the Laplace average   
\begin{gather}
\label{eq:Laplace_av}
g_\lambda^{av}(x) = \lim\limits_{t\rightarrow \infty}\frac{1}{t}\int\limits_0^t (g\circ \phi(s, x)) e^{-\lambda s} d s,
\end{gather}
for any $g\in C^1$ that satisfies $g(x^\ast)=0$ and $v_1^T \nabla g(x^\ast) \neq 0$. The Laplace average $g_{\lambda_1}^{av}(x)$ is a projection of $g$ onto $s_1$, which is therefore equal to $s_1$ up to a multiplication with a scalar (see e.g. \cite{mauroy2013isostables}). Note that the Laplace averages diverge if $x$ does not belong to $\cB(x^\ast)$. 
\subsubsection{Main results}
In this section, we present our main result, which shows the relationship between eventual monotonicity and spectral properties of the Koopman operator. We exploit in particular the fact that the dominant eigenfunction $s_1$ captures the asymptotic behavior of the system.

\begin{thm} \label{prop:ev-mon} 
Consider that the system~\eqref{eq:sys} with $f\in C^2(\cD)$ has an equilibrium $x^\ast$ and satisfies Assumptions A1--A3, while $\lambda_j$ are the eigenvalues of $J(x^\ast)$.\\
(i)  If the system is eventually monotone with respect to $\cK$ on a set $\cC\subseteq \cB(x^\ast)$, then $\lambda_1$ is real and negative, the right eigenvector $v_1$ of $J(x^\ast)$ can be chosen such that $v_1 \succ_\cK 0$, while the eigenfunction $s_1$ can be chosen such that $s_1(x) \ge s_1(y)$ for all $x$, $y\in \cC$ satisfying $x\succeq_\cK y$. Furthermore, $s_1(x) > s_1(y)$ for all $x$, $y\in \cC$ satisfying $x \gg_\cK y$. \\
(ii) Furthermore, the system is strongly eventually monotone with respect to $\cK$ on a set $\cC \subseteq \cB(x^\ast)$ if and only if $\lambda_1$ is simple, real and negative, $\lambda_1 > \Re(\lambda_j)$ for all $j \ge 2$, $v_1$ and $s_1$ can be chosen such that $v_1\gg_\cK 0$ and $s_1(x)> s_1(y)$ for all $x$, $y\in \cC$ satisfying $x\succ_\cK y$; \\
(iii) If $\cC$ is compact, then (strong) eventual monotonicity in (i) and (ii) is understood in the uniform sense.
\end{thm}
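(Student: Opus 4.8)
The plan is to extract the asymptotic (large-$t$) content of eventual monotonicity from the Koopman expansion of Proposition~\ref{prop:Koopman_expansion} and then mirror the linear arguments of Propositions~\ref{prop:ev-pos-dyn}, \ref{prop:pos-cone} and~\ref{prop:trivial}. Fix $x\succeq_\cK y$ in $\cC$ and set $\delta(t)=\phi(t,x)-\phi(t,y)$; by Definition~\ref{def:ev-mon}, $\delta(t)\in\cK$ for all $t\ge\tau_0$, while~\eqref{eq:expansion} gives
\begin{gather*}
\delta(t) = \sum_{j=1}^n v_j\bigl(s_j(x)-s_j(y)\bigr)e^{\lambda_j t} + \bigl(R(t,x)-R(t,y)\bigr),
\end{gather*}
with remainder $o(e^{\lambda_1 t})$ in the regime where the dominant coefficient is nonzero. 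Assumptions A1--A2 make $x^\ast$ exponentially stable, so every $\Re(\lambda_j)<0$ and $\lambda_1<0$ once $\lambda_1$ is shown real; moreover $s_j(x^\ast)=0$ since $\lambda_j\ne0$ forces $s_j(x^\ast)(e^{\lambda_j t}-1)=0$.

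For part (i), I would first rule out a complex dominant eigenvalue. Choosing an ordered pair that excites a hypothetical complex mode $\lambda=\mu+i\omega$ with $\mu=\Re(\lambda_1)$, its contribution $2\Re\bigl(v_\lambda(s_\lambda(x)-s_\lambda(y))e^{\lambda t}\bigr)$ makes the normalized direction of $\delta(t)$ rotate in the plane spanned by $\Re(v_\lambda),\Im(v_\lambda)$, so it cannot stay in the pointed closed cone $\cK$ for all large $t$ — contradicting $\delta(t)\in\cK$. Hence $\lambda_1$ is real and negative. For a pair with $s_1(x)\ne s_1(y)$, dividing by $\|\delta(t)\|$ and letting $t\to\infty$ shows $\delta(t)/\|\delta(t)\|\to\sgn(s_1(x)-s_1(y))\,v_1/\|v_1\|$, which lies in $\cK$ by closedness; pointedness ($\cK\cap(-\cK)\subseteq\{0\}$) then forbids the limit $-v_1/\|v_1\|$, forcing $s_1(x)>s_1(y)$ whenever the two differ. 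This simultaneously fixes the sign giving $v_1\succ_\cK 0$ and yields $s_1(x)\ge s_1(y)$ for $x\succeq_\cK y$, with strictness for $x\gg_\cK y$ following because openness of $\inter(\cK)$ prevents the dominant coefficient from vanishing on such pairs.

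For the sufficiency in part (ii), assume $\lambda_1$ simple, real, negative with $\lambda_1>\Re(\lambda_j)$ for $j\ge2$, $v_1\gg_\cK 0$, and $s_1$ strictly monotone. For $x\succ_\cK y$ we get $s_1(x)-s_1(y)>0$, and the strict spectral gap makes the $v_1$-term dominate:
\begin{gather*}
e^{-\lambda_1 t}\,\delta(t)\;\longrightarrow\;v_1\bigl(s_1(x)-s_1(y)\bigr)\in\inter(\cK),
\end{gather*}
so, since $\inter(\cK)$ is open and $e^{\lambda_1 t}>0$, $\phi(t,x)\gg_\cK\phi(t,y)$ for all large $t$, i.e. strong eventual monotonicity (the non-strict case $x\succeq_\cK y$ reduces to $x=y$ or $x\succ_\cK y$). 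The converse combines part (i) with the linearization at $x^\ast$: by the Hartman--Grobman theorem the asymptotic dynamics near $x^\ast$ is governed by $e^{J(x^\ast)t}$, and strong eventual monotonicity renders this linear flow strongly eventually positive in the $\cK$-coordinates, so Propositions~\ref{prop:ev-pos-dyn}(ii) and~\ref{prop:trivial}(ii) deliver simplicity of $\lambda_1$, the strict dominance $\lambda_1>\Re(\lambda_j)$, and $v_1\gg_\cK 0$, while the strict form of the sign argument above gives strict monotonicity of $s_1$.

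Finally, part (iii) follows by making the limits uniform: on compact $\cC$ the remainder $R(t,\cdot)$ in~\eqref{eq:expansion} and the convergence $e^{-\lambda_1 t}\delta(t)\to v_1(s_1(x)-s_1(y))$ are uniform by continuity of the $s_j$ and a standard compactness argument, so a single $\tau_0$ serves all ordered pairs. The step I expect to be the main obstacle is the necessity direction: excluding a complex eigenvalue on the dominant vertical line when real and complex dominant modes coexist, and controlling the remainder uniformly when $s_1(x)-s_1(y)$ is small or vanishes (and when $\lambda_1$ is non-simple in part (i)). Handling these requires arguing over a family of ordered pairs rather than a single one, and using both closedness and pointedness of $\cK$ together with the uniform $o(\cdot)$ estimate of the expansion.
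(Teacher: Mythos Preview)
Your sufficiency argument in (ii) and the compactness argument in (iii) match the paper's route, and your asymptotic-direction argument for the weak inequality $s_1(x)\ge s_1(y)$ in (i) is a valid alternative to the paper's use of the Laplace average~\eqref{eq:Laplace_av}: the paper integrates $(g\circ\phi(s,x)-g\circ\phi(s,y))e^{-\lambda_1 s}$ with a monotone observable $g$, so eventual order of the flow makes the integrand eventually nonnegative and the $1/t$ kills the transient. That approach avoids your case split on whether $s_1(x)=s_1(y)$ and, more importantly, is insensitive to non-simplicity of $\lambda_1$, which you correctly flag as an obstacle for your normalized-limit argument.

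The genuine gap is in the two strictness claims. For the second half of (i) you assert that ``openness of $\inter(\cK)$ prevents the dominant coefficient from vanishing'' when $x\gg_\cK y$. This does not follow: nothing in the expansion forbids $s_1(x)=s_1(y)$ on such a pair, and if it vanishes your normalized-limit argument is silent. The paper's argument is of a different nature: if $s_1(x)=s_1(y)$ with $x\gg_\cK y$, the already-proved weak monotonicity squeezes $s_1$ to a constant on the entire order-interval $[y,x]_\cK$, which has nonempty interior; but the level sets of $s_1\in C^1(\cB(x^\ast))$ (the isostables) are of codimension~$1$, via the local form $s_1(x)=w_1^T(x-x^\ast)+o(\|x-x^\ast\|)$ propagated by backward integration, so they cannot contain an open set. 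That geometric fact is the missing ingredient. Likewise, in the necessity of (ii) you invoke ``the strict form of the sign argument'' to get $s_1(x)>s_1(y)$ for $x\succ_\cK y$, but your sign argument only bites once the dominant coefficient is already known to be nonzero. The paper closes this differently: strong eventual monotonicity gives $z=\phi(\tau_0,x)\gg_\cK w=\phi(\tau_0,y)$, part (i) then yields $s_1(z)>s_1(w)$, and the eigenfunction identity $s_1(\phi(\tau_0,\cdot))=e^{\lambda_1\tau_0}s_1(\cdot)$ transports this back to $s_1(x)\ne s_1(y)$.
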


\begin{proof}
First, we note that $s_1\in C^1(\cB(x^\ast))$, since $\Re(\lambda_i) < 0$ for all $i$ and $f\in C^2(\cD)$ (\cite{mauroy2014global}).\\
(i) By Proposition~\ref{prop:trivial}, $v_1\succ_\cK 0$ and $\lambda_1$ is real, which implies that $s_1$ is a real-valued function. Now, for some monotone observable $g\in C^1$ (satisfying $g(x^\ast)=0$ and $v_1^T \nabla g(x^\ast) \neq 0$), it follows from \eqref{eq:Laplace_av} that
\begin{multline}
\label{eq:increasing_s1}
s_1(x) - s_1(y) = \\\lim\limits_{t\rightarrow \infty} \frac{1}{t} \int\limits_0^t(g\circ\phi(s,x) - g\circ\phi(s,y)) e^{-\lambda_1 s} d s = \\ 
 \lim\limits_{t\rightarrow \infty} \frac{1}{t} \int\limits_{\tau_0}^t(g\circ\phi(s,x) - g\circ\phi(s,y)) e^{-\lambda_1 s} d s
\ge 0
\end{multline}
for all $x\succeq_\cK y$ since $\phi(s,x)\succeq_\cK \phi(s,y)$ and $s \ge \tau_0$.\\
We will prove the second part of the statement by contradiction. Assume that there exist $x\gg_\cK y$ such that $s_1(x) = s_1(y)$ (\eqref{eq:increasing_s1} implies that $s_1(x) < s_1(y)$ is impossible). It follows from \eqref{eq:increasing_s1} that $s_1(x) \ge s_1(z) \ge s_1(y)$ for all $z$ such that $z\in[y, x]_\cK$. However, we assumed above that $s_1(x) = s_1(y)$, which implies that $s_1(y) = s_1(z) = s_1(x)$ for all $z\in[y, x]_\cK$. It follows that $s_1(\cdot)$ is constant on the interval $[y, x]_\cK$, which has a nonzero Lebesgue measure since $x \gg_\cK y$. This is impossible since it is known that the level sets of $s_1 \in C^1(\cB(x^\ast))$ (i.e. the isostables) are of co-dimension $1$ in $\cB(x^\ast)$ (see~\cite{mauroy2013isostables}). Indeed, let $w_1$ be a left eigenvector of the Jacobian matrix corresponding to $\lambda_1$, then Hartman-Grobman theorem implies that $s_1(x)=w_1^T (x-x^\ast) + o(\|x-x^\ast\|)$ in the neighborhood of the equilibrium $x^\ast$ (cf.~\cite{Lan}), so that the isostables are locally homeomorphic to hyperplanes of co-dimension $1$ (\cite{mauroy2013isostables}). Every isostable can be obtained by backward integration starting from the neighborhood of $x^\ast$ and is therefore also of co-dimension $1$.\\
Finally, we note that we can obtain $v_1^T \nabla s_1(x^\ast) =1$ by multiplying $s_1$ with a proper positive constant. The above results imply $v_1^T \nabla s_1(x^\ast) \geq 0$. Since we know that $\nabla s_1(x^\ast)= w_1$, we have $v_1^T\nabla s_1(x^\ast) \neq 0$ and the result follows.\\
(ii) \emph{Necessity.} By Proposition~\ref{prop:trivial}, $\lambda_1$ is simple, real and negative, while $v_1\gg_\cK 0$. Due to the premise, $x\succ_\cK y$ implies that there exists $\tau_0$ such that $\phi(t,x) - \phi(t,y) \gg_\cK 0$ for all $t\ge \tau_0$. Moreover, (i) implies that $s_1(x)\ge s_1(y)$. Let $z = \phi(\tau_0,x)$ and $w = \phi(\tau_0, y)$. Since $z\gg_\cK w$, by (i) we have that $s_1(z) > s_1(w)$. This directly implies that $s_1(x) \ne s_1(y)$ by the property \eqref{eq:prop_eigenf} of $s_1(\cdot)$ and proves the claim. \\
\emph{Sufficiency.} It follows from \eqref{eq:expansion} that
\begin{equation}
\phi(t,x) - \phi(t,y) = e^{\lambda_1 t}\left(v_1 (s_1(x) - s_1(y)) +  \bar{R}(t) \right)\label{eq:phi-bound}
\end{equation}
with
\begin{equation*}
\begin{split}
\bar{R}(t) & = \sum \limits_{j=2}^n v_j (s_j(x)-s_j(y)) e^{(\lambda_j-\lambda_1) t}\\
& \qquad + e^{-\lambda_1 t} (R(t,x)-R(t,y)).
\end{split}
\end{equation*}
Since $v_1\gg_\cK 0$ it follows that $v_1(s_1(x) - s_1(y)) \gg_\cK 0$ for all $x\succ_\cK y$. Moreover, Proposition \ref{prop:Koopman_expansion} implies that
\begin{equation*}
0 = \lim_{t \rightarrow 0} \frac{|R(t,x)|}{\left\|\sum \limits_{j=1}^n v_j s_j(x) e^{\lambda_j t} \right\| } \geq  \lim_{t \rightarrow 0} \frac{e^{-\lambda_1 t}|R(t,x)|}{ \left\|\sum \limits_{j=1}^n v_j s_j(x) e^{(\lambda_j - \lambda_1) t} \right\| },
\end{equation*}
so that $\lim_{t \rightarrow 0} e^{-\lambda_1 t} |R(t,x)| = 0$ since $\lambda_j - \lambda_1 < 0$ for all $j>1$. Moreover, since $s_j(x)$ is finite for any given $x \in \cC \subseteq \cB(x^\ast)$,
we finally obtain that $\lim_{t \rightarrow 0} \bar{R}(t) = 0$. Then there exists $\tau_0\ge 0$ such that
\begin{gather}\label{eq:r-bound}
v_1(s_1(x) - s_1(y)) + \bar{R}(t) \gg_\cK 0\quad \forall t\ge \tau_0.
\end{gather}

The condition~\eqref{eq:phi-bound} together with \eqref{eq:r-bound} imply that
\begin{gather*}
\phi(t, x) - \phi(t,y) \gg_\cK 0\quad \forall x\succ_\cK y, \forall t\ge \tau_0,
\end{gather*}
which completes the proof.\\
(iii) The proof is straightforward.
\end{proof}
\begin{rem}Under the premise of Theorem~\ref{prop:ev-mon}, the condition $s_1(x) \ge s_1(y)$ for all $x \succeq_\cK y$ is equivalent to $\nabla s_1(x) \in \cK^\ast$, where $\cK^\ast$ is the dual cone to $\cK$, namely $\cK^\ast= \{y \in \R^n \bigl| y^T z \ge 0,\, \forall z \in \cK \}$. The condition $s_1(x) > s_1(y)$ for all $x \gg_\cK y$ is equivalent to $\nabla s_1(x) \in \inter(\cK^\ast)$.
\end{rem}
This result can be seen as a nonlinear extension of the Perron-Frobenius theorem (cf.~\cite{lemmens2012nonlinear}). If the assumptions in Theorem~\ref{prop:ev-mon} hold, it provides a complete description of strong eventually monotone systems, but only a necessary condition for eventual monotonicity. Surprisingly, this is different from classical monotonicity, since there exist necessary and sufficient conditions for monotonicity (Kamke-M\"uller conditions), but only sufficient conditions for strong monotonicity.
 
In the case of linear systems $\dot x = A x$, the points (i) and (ii) of Theorem \ref{prop:ev-mon} are reduced to Proposition~\ref{prop:ev-pos-dyn} since $s_1(x)=w_1^T x$ and $\nabla s_1(x)=w_1$, where $w_1$ is a left eigenvector of $A$ corresponding to $\lambda_1$. In other words, the statement of Proposition~\ref{prop:ev-pos-dyn} (in terms of eigenvectors and eigenvalues of $A$) is directly extended to nonlinear systems in Theorem~\ref{prop:ev-mon} (in terms of eigenfunctions and eigenvalues of the Koopman operator). Moreover, the proof of Theorem~\ref{prop:ev-mon} uses similar techniques as the proof of Proposition~\ref{prop:ev-pos-dyn}, which shows the power of the Koopman operator framework.

\begin{rem}
Using Theorem~\ref{prop:ev-mon} and the relation~\eqref{eq:def_eigenf}, it is now straightforward to show point (ii) in Proposition~\ref{prop:order_traject} for the case of strongly eventually monotone systems on a compact set $\cC$, provided that the assumptions in Theorem~\ref{prop:ev-mon} hold. Indeed, we have that $s_1(x)> 0$ for all $x\succ_\cK x^\ast$, hence $(f(x))^T\nabla s_1(x) =\lambda_1 s_1(x) < 0$. Now, since $\nabla s_1(x)\in\inter(\cK^\ast)$, it follows that $f(x)\not\succ_\cK 0$ for all $x\succ_\cK x^\ast$. 
\end{rem}
We conclude this subsection by a corollary of Theorem~\ref{prop:ev-mon}, which describes the geometric properties of the system in the basin of attraction. We introduce the following sets $\cB_\alpha = \left\{ x\in \cB(x^\ast) \Bigl| |s_1(x)| \le \alpha \right\}$, $\partial\cB_\alpha = \left\{ x\in\cB(x^\ast) \Bigl| |s_1(x)| = \alpha \right\}$. The level sets $\partial\cB_\alpha$ are called \emph{isostables} (\cite{mauroy2013isostables}) and contain the initial conditions of trajectories that converge synchronously towards the equilibrium. With $\alpha =\infty$, the set $\cB_\alpha$ is the basin of attraction, while $\partial \cB_\alpha$ is its boundary. We have the following result, whose proof can be found in Appendix~\ref{app:proofs}.

\begin{cor} \label{prop:level-set-proper} Consider that the system~\eqref{eq:sys} with $f\in C^2(\cD)$ has an equilibrium $x^\ast$ and satisfies Assumptions A1--A3. Assume that it is eventually monotone with respect to $\cK$ on the basin of attraction $\cB(x^\ast)$. Then the following relations hold for isostables defined with a monotone eigenfunction $s_1(x)$ and for all finite positive $\alpha$:\\
(i) $[z, w]_\cK\subset\cB_\alpha$ for any $w$, $z$ in $\cB_\alpha$ such that $z\preceq_\cK w$; \\
(ii) the level set $\partial\cB_\alpha$ does not contain points $z$, $w$ such that $z\gg_\cK w$. If the system is strongly eventually monotone, then $\partial\cB_\alpha$ does not contain points $z$, $w$ such that $z\succ_\cK w$.
\end{cor}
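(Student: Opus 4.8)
The plan is to obtain both parts as direct consequences of the monotonicity of the dominant eigenfunction $s_1$ established in Theorem~\ref{prop:ev-mon}, combined with the order-interval property of the basin from Proposition~\ref{prop:order_traject}(iii). Throughout I would use that $\cB_\alpha \subseteq \cB(x^\ast)$ and that, since $s_1$ is the monotone eigenfunction, $s_1(x) \ge s_1(y)$ whenever $x \succeq_\cK y$ and $s_1(x) > s_1(y)$ whenever $x \gg_\cK y$.

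For part (i), I would fix $w$, $z \in \cB_\alpha$ with $z \preceq_\cK w$ and take an arbitrary $u \in [z,w]_\cK$, i.e. $z \preceq_\cK u \preceq_\cK w$. Applying the monotonicity of $s_1$ to each inequality gives $s_1(z) \le s_1(u) \le s_1(w)$. Since $z$, $w \in \cB_\alpha$ we have $s_1(z) \ge -\alpha$ and $s_1(w) \le \alpha$, so the sandwich yields $|s_1(u)| \le \alpha$. It then remains to verify $u \in \cB(x^\ast)$, which is exactly Proposition~\ref{prop:order_traject}(iii): because $z$, $w \in \cB_\alpha \subseteq \cB(x^\ast)$, the whole order-interval $[z,w]_\cK$ lies in $\cB(x^\ast)$. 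Hence $u \in \cB_\alpha$, and (i) follows.

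For part (ii), I would argue by contradiction. Suppose $z$, $w \in \partial\cB_\alpha$ satisfy $z \gg_\cK w$, so that $|s_1(z)| = |s_1(w)| = \alpha$. The strict clause of Theorem~\ref{prop:ev-mon}(i) gives $s_1(z) > s_1(w)$, so $z$ and $w$ cannot carry the same value of $s_1$; equivalently, a single level set $\{s_1 = c\}$ of a monotone eigenfunction can contain no $\gg_\cK$-ordered pair. This is the mechanism behind the claim. In the strongly eventually monotone case, Theorem~\ref{prop:ev-mon}(ii) upgrades the strict inequality to every pair with $z \succ_\cK w$, so the identical argument rules out $\succ_\cK$-ordered pairs on a level set, giving the stronger conclusion.

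The delicate step, which I expect to be the main obstacle, is the sign bookkeeping in part (ii). The set $\partial\cB_\alpha = \{\,|s_1| = \alpha\,\}$ decomposes into the two sheets $\{s_1 = \alpha\}$ and $\{s_1 = -\alpha\}$, and strict monotonicity only forbids a strictly ordered pair lying on one common sheet: the mixed configuration $s_1(z) = \alpha$, $s_1(w) = -\alpha$ is perfectly compatible with $s_1(z) > s_1(w)$ and is \emph{not} excluded by monotonicity alone. The argument therefore has to be made on a single signed level value $s_1 = \alpha$ rather than on $|s_1| = \alpha$, i.e. one must fix the sheet (or invoke a sign convention on $s_1$) before comparing. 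Once this reduction to a single sheet is carried out, the contradiction with Theorem~\ref{prop:ev-mon} is immediate, and part (i) above supplies the companion regularity statement for the enclosed region $\cB_\alpha$.
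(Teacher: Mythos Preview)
Your proposal is correct and follows essentially the same route as the paper: part~(i) via the sandwich $s_1(z)\le s_1(u)\le s_1(w)$ together with Proposition~\ref{prop:order_traject}(iii), and part~(ii) by contradiction with the strict monotonicity clauses of Theorem~\ref{prop:ev-mon}.

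Your flag on the two-sheet issue is exactly right, and the paper resolves it precisely the way you anticipate: its proof of~(ii) is written for the set $\{x:\,s_1(x)=\alpha\}$ with $\alpha\in\R$, i.e.\ for a single signed level of $s_1$, not for the union $\{|s_1|=\alpha\}$. So the intended reading of ``the level set $\partial\cB_\alpha$'' in~(ii) is per-sheet, and with that interpretation the contradiction with Theorem~\ref{prop:ev-mon} is immediate, as you say. Your observation that the mixed configuration $s_1(z)=\alpha$, $s_1(w)=-\alpha$ is compatible with $z\gg_\cK w$ is correct and shows the per-sheet reading is not merely cosmetic; neither your argument nor the paper's excludes that case, and it genuinely can occur (e.g.\ already in the linear situation $s_1(x)=w_1^Tx$ with $w_1\gg_\cK 0$).
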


This result implies that, for strongly eventually monotone systems, the level sets $\partial\cB_\alpha$ of $s_1$ contain only incomparable points (with respect to some cone $\cK$). It directly follows from the fact that the eigenfunction $s_1$ is a monotone function.

\subsection{Relationship Between Eventual Monotonicity and Differential Positivity}
In this subsection we discuss the relation between eventually monotone systems and \emph{differentially positive} systems.
We need a few definitions to proceed. Assume that that the vector field $f$ is continuously differentiable and consider the so-called \emph{prolonged} dynamical system: 
\begin{align}
\label{sys:f-pr} &
\begin{array}{l}
\dot x = f(x),  \\
\dot{\delta x} = J(x) \delta x,
\end{array} 
\end{align} 
with $(x,\delta x) \in \mathbb{R}^n\times \mathbb{R}^n$ and where $J(x)$ stands for the Jacobian matrix of $f(x)$. We denote the differential of $\phi(t,x)$ with respect to $x$ by $\partial \phi(t,x)$. Then the prolonged system induces a flow $(\phi,\partial \phi)$ such that $(t,x,\delta x) \mapsto (\phi(t,x),\partial \phi(t,x) \delta x)$ is a solution of \eqref{sys:f-pr}. Following the definitions by~\cite{forni2015differentially}, we let \emph{a smooth cone field} $\cK(x)$ be defined as
\begin{gather*}
\cK(x) =\left\{ \delta x \in \R^n \Bigl| k_i(x, \delta x) \ge 0\, i = 1,\dots, m\right\},
\end{gather*}
where $\cK(x)$ is a positive cone for every $x\in\R^n$, and $k_i(\cdot,\cdot)$ are smooth functions.
\begin{defn}
The system~$\dot x = f(x)$ with $f\in C^1(\cD)$ is called differentially positive on $\cC\subseteq\cD$ with respect to the cone field $\cK(x)$ if the prolonged system leaves $\cK(x)$ invariant. Namely, for any $x\in\cC$
\begin{equation}
 \delta x \in\cK(x)\, \Rightarrow\, 
 \partial \phi(t,x)\delta x \in\cK(\phi(t,x)) \,\,  \forall t \ge 0 \,.
 \label{eq:diff-pos}
\end{equation}
The system is (uniformly) strictly differentially positive on $\cC\subseteq \cD$ if it is differentially positive on $\cC$ and if there exist a constant
$T > 0$ and a cone field $\cR(x) \subset \inter(\cK(x)) \cup \{0\}$  such that for all $x\in\cC$, $t \ge T$
\begin{equation}
\delta x \in\cK(x)\, \Rightarrow\, 
\partial \phi(t,x)\delta x \in\cR(\phi(t,x)).
\label{eq:strict-diff-pos}
\end{equation}
\end{defn}
Inspired by~\cite{mauroy2015operator}, we will use the following cone field based on the eigenfunctions of the Koopman operator:
\begin{multline}\label{ice-cream-cone-field}
\cK_{\alpha}(x)= \Bigl\{ y \in \R^n \Bigl|\,\,\sum\limits_{i =2}^n\, \alpha_i(x) \, |y^T \nabla s_i(x)|^2 \le \\(y^T \nabla s_1(x))^2,\,\,
y^T \nabla s_1(x) \ge 0\Bigl\}\,. 
\end{multline}
For every $x$ the set $\cK_\alpha(x)$ with smooth positive functions $\alpha_i(x)$ is a cone which appears to be the extension of~\eqref{ice-cream-cones} to the nonlinear case. By extending Proposition~\ref{prop:pos-cone} to nonlinear systems, we link differential positivity and eventual monotonicity in the following result (the proof is given in Appendix~\ref{app:proofs}).

\begin{prop}\label{prop:diff-pos-cone}
Consider that the system~\eqref{eq:sys} with $f\in C^2(\cD)$ has an equilibrium $x^\ast$ and satisfies Assumptions A1--A3, while $\lambda_i$ are the eigenvalues of the Jacobian matrix $J(x^\ast)$. Then: \\
(i) the system is strictly differentially positive on $\cC\subset\cB(x^\ast)$ with respect to $\cK_\alpha(x)$ with positive functions $\alpha_i(x)$ if and only if $\lambda_1$ is simple, real, negative and $\lambda_1 > \Re(\lambda_j)$ for all $j\ge 2$; \\
(ii) furthermore, the system is strongly eventually monotone on $\cC\subset\cB(x^\ast)$ with respect to $\Rnn^n$ if and only if $\lambda_1$ is simple, real, negative, and $\lambda_1 > \Re(\lambda_j)$ for all $j\ge 2$, there exist $\beta_i > 0$ and a function $\gamma(x) > 0$ such that $\cK_{\beta}(0) \subset \inter(\Rnn^n)\cup\{0\}$, and $\Rnn^n\subset \inter(\cK_\alpha(x))\cup\{0\}$ for all $x$ with $\alpha_i(\cdot) = \gamma(\cdot)$  for all $i$.
\end{prop}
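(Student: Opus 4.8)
The plan is to reduce both parts to the linear results of Section~\ref{s:even-pos} by exploiting the fact that the Koopman eigenfunctions linearise the prolonged dynamics. Differentiating the defining relation $s_i(\phi(t,x)) = s_i(x)e^{\lambda_i t}$ (from \eqref{eq:prop_eigenf}, equivalently \eqref{eq:def_eigenf}) with respect to the initial condition gives $\nabla s_i(\phi(t,x))^T \partial\phi(t,x) = e^{\lambda_i t}\nabla s_i(x)^T$. Hence, writing $\delta x(t) = \partial\phi(t,x)\delta x$ and $\zeta_i(t) = \nabla s_i(\phi(t,x))^T\delta x(t)$, these scalar coordinates evolve as $\zeta_i(t) = e^{\lambda_i t}\zeta_i(0)$. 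In these coordinates the cone field \eqref{ice-cream-cone-field} reads $\sum_{i\ge2}\alpha_i(\phi(t,x))|\zeta_i(t)|^2 \le \zeta_1(t)^2$ with $\zeta_1(t)\ge0$, so along any trajectory the prolonged system behaves exactly like the diagonal linear system treated in Proposition~\ref{prop:pos-cone}. This identity is the engine of the whole proof; its rigorous justification (in particular $s_i\in C^1(\cB(x^\ast))$, which holds under Assumptions A1--A3 with $f\in C^2$) is the one genuinely analytic step.

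For part (i), sufficiency: assuming $\lambda_1$ simple, real, negative and dominant, I would take the $\alpha_i$ to be constants. Since $\lambda_1$ is real, $\zeta_1(0)\ge0$ gives $\zeta_1(t)=e^{\lambda_1 t}\zeta_1(0)\ge0$; dividing the cone inequality by $e^{2\lambda_1 t}$ turns the off-diagonal terms into $\alpha_i e^{2(\Re(\lambda_i)-\lambda_1)t}|\zeta_i(0)|^2$, whose exponential factors are $\le1$ for $t\ge0$ because $\Re(\lambda_i)<\lambda_1$. This yields invariance of $\cK_\alpha(x)$ for all $t\ge0$. For strictness, fix $T>0$ and set $\mu=\max_{i\ge2}e^{2(\Re(\lambda_i)-\lambda_1)T}<1$; the same computation shows $\delta x(T)\in\cK_{\alpha/\mu}(\phi(T,x))$, and since $\alpha/\mu>\alpha$ componentwise we have $\cK_{\alpha/\mu}(\cdot)\subset\inter(\cK_\alpha(\cdot))\cup\{0\}$, so $\cR(\cdot)=\cK_{\alpha/\mu}(\cdot)$ serves as the required inner cone field. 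For necessity I would evaluate the property at the equilibrium, where $\nabla s_i(x^\ast)=w_i$, the cone field becomes the fixed Lorentz-type cone $\cK_{\bar\alpha}$ of \eqref{ice-cream-cones} and the prolonged dynamics reduce to $\dot{\delta x}=J(x^\ast)\delta x$. Strict differential positivity then forces $e^{J(x^\ast)t}$ to be strictly positive with respect to the proper cone $\cK_{\bar\alpha}$, and Perron--Frobenius theory for cone-preserving linear flows (the differential Perron--Frobenius theorem of~\cite{forni2015differentially}) makes $\lambda_1$ a simple, real, strictly dominant eigenvalue; negativity follows from Assumption~A1.

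For part (ii) I would lean on Theorem~\ref{prop:ev-mon}(ii): strong eventual monotonicity with respect to $\Rnn^n$ on $\cC$ is equivalent to the spectral condition together with $v_1\gg0$ and $s_1$ strictly monotone, the latter being $\nabla s_1(x)\gg0$ for all $x\in\cC$ (by the Remark following Theorem~\ref{prop:ev-mon} and self-duality of the orthant). It then remains to match these two geometric conditions to the two cone conditions. The inclusion $\Rnn^n\subset\inter(\cK_\gamma(x))\cup\{0\}$ unpacks, for every $y\succ0$, into $y^T\nabla s_1(x)>0$ and $\gamma(x)\sum_{i\ge2}|y^T\nabla s_i(x)|^2<(y^T\nabla s_1(x))^2$; the first is exactly $\nabla s_1(x)\gg0$, and given this a small enough continuous $\gamma(x)>0$ (the minimum of $(y^T\nabla s_1(x))^2$ over the compact set $\{y\succeq0,\,\|y\|=1\}$ divided by a bound on the remaining sum) secures the second, so this inclusion is equivalent to $\nabla s_1(x)\gg0$ on $\cC$. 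For the inclusion $\cK_\beta(0)\subset\inter(\Rnn^n)\cup\{0\}$ I would use that at the equilibrium $\cK_\beta(0)$ is the fixed cone $\cK_{\bar\beta}$, and that as the $\beta_i\to\infty$ this cone shrinks to the ray $\cK_\infty=\Rnn v_1$ (the only direction annihilated by $w_2,\dots,w_n$, with $w_1^T v_1=1>0$). Hence some $\beta$ with $\cK_{\bar\beta}\subset\inter(\Rnn^n)\cup\{0\}$ exists if and only if this ray lies in the open orthant, i.e. $v_1\gg0$. Assembling the two equivalences recovers precisely the conditions $v_1\gg0$ and $\nabla s_1(x)\gg0$ of Theorem~\ref{prop:ev-mon}(ii), completing the iff. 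I expect the main obstacle to be the necessity direction of part (i): transferring a strictness property assumed only on the (possibly non-invariant) set $\cC$ into a clean Perron--Frobenius statement about $J(x^\ast)$. The reduction must be justified by letting $t\to\infty$ along trajectories, where $\phi(t,x)\to x^\ast$ and $\nabla s_i(\phi(t,x))\to w_i$, so that the cone field converges to the proper cone $\cK_{\bar\alpha}$ and the variational flow to $e^{J(x^\ast)t}$; care is needed to ensure the strict inclusion survives this limit.
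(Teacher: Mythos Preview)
Your proposal is correct and follows essentially the same route as the paper: both hinge on the identity $\nabla s_i(\phi(t,x))^T\partial\phi(t,x)\,\delta x = e^{\lambda_i t}\nabla s_i(x)^T\delta x$ to reduce the cone-field invariance to the diagonal linear computation of Proposition~\ref{prop:pos-cone}, and both handle part~(ii) by matching the two cone inclusions to the conditions $v_1\gg 0$ and $\nabla s_1(x)\gg 0$ of Theorem~\ref{prop:ev-mon}(ii). The only differences are cosmetic: you spell out the necessity argument for~(i) via a limit to the equilibrium and Perron--Frobenius, whereas the paper simply defers this to~\cite{mauroy2015operator}; and your ``small enough $\gamma(x)$'' is in fact the correct direction (the paper's ``sufficiently large'' is a slip, as larger $\alpha_i$ shrink $\cK_\alpha(x)$).
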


We must point again to the similarities of the proofs of Proposition~\ref{prop:pos-cone} and Proposition~\ref{prop:diff-pos-cone}, although the latter is the nonlinear extension of the former. However, the implications of the results are slightly different. As discussed in Section~\ref{ss:cone-ev-pos}, in the linear setting, any eventually positive system is also positive (i.e. monotone) with respect to some cone $\cK$. In the nonlinear setting, any sufficiently smooth strongly eventually monotone system is differentially positive with respect to some cone field $\cK(x)$, which does not generally imply monotonicity with respect to some cone $\cK$. This is an indication that eventual monotonicity can offer a trade-off between monotonicity and differential positivity, as long as ordering properties of the solutions are concerned. To summarize, Proposition~\ref{prop:diff-pos-cone} provides conditions, based on the invariant cone field, under which a differentially positive system behaves asymptotically as a monotone system. 

\subsection{Eventual Monotonicity with Respect to a Cone}
It can be shown that the statement of Proposition~\ref{prop:diff-pos-cone} still holds when the orthant $\Rnn^n$ is replaced by a constant cone $\cK$. However, using Proposition~\ref{prop:diff-pos-cone} in practice requires computation of the cone fields, which is not an easy task. In the following result, we discuss an alternative way to check if the system is strongly eventually monotone with respect to a constant cone $\cK$, and how to find this cone. This result can be also seen as an extension of the results in Theorem~\ref{prop:ev-mon}. The proof is given in Appendix~\ref{app:proofs}.
\begin{cor} \label{prop:another-charact} 
Consider that the system~\eqref{eq:sys} with $f\in C^2(\cD)$ has an equilibrium $x^\ast$ and satisfies Assumptions A1--A3, while $\lambda_j$ are the eigenvalues of $J(x^\ast)$. Assume that $\lambda_1$ is simple, real, negative and $\lambda_1 > \Re(\lambda_j)$ for all $j\ge 2$. Then
\begin{gather}
v_1^T \nabla s_1(x) > 0\quad\forall x\in\cB(x^\ast) \label{cond:sem}
\end{gather}
with $v_1^T\nabla s_1(x^\ast) = 1$ if and only if there exists a cone $\cK$ with respect to which the system is strongly eventually monotone.
\end{cor}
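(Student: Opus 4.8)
The plan is to read the claim off Theorem~\ref{prop:ev-mon}(ii) together with the remark following it: under the eigenvalue hypotheses already assumed here, the system is strongly eventually monotone with respect to a cone $\cK$ exactly when $v_1\in\inter(\cK)$ and $\nabla s_1(x)\in\inter(\cK^\ast)$ for all $x$ (the latter being the infinitesimal form of $s_1$ being strictly monotone with respect to $\cK$). Hence the statement reduces to an equivalence between the scalar condition~\eqref{cond:sem} and the existence of a cone realizing these two inclusions, and I would prove the two implications separately. Throughout I use the standard duality fact that a vector lies in $\inter(\cK)$ if and only if it has strictly positive inner product with every nonzero element of $\cK^\ast$.

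For the necessity of~\eqref{cond:sem} I would assume strong eventual monotonicity with respect to some $\cK$. By Theorem~\ref{prop:ev-mon}(ii) one may choose $v_1\gg_\cK 0$, i.e. $v_1\in\inter(\cK)$, and by the remark the strict monotonicity of $s_1$ is equivalent to $\nabla s_1(x)\in\inter(\cK^\ast)$. Since the interior of a proper cone excludes the origin, $\nabla s_1(x)\in\cK^\ast\setminus\{0\}$, and the duality fact gives $v_1^T\nabla s_1(x)>0$ for all $x$. Rescaling $s_1$ (equivalently $w_1=\nabla s_1(x^\ast)$) by the positive number $v_1^Tw_1$ enforces the normalization $v_1^T\nabla s_1(x^\ast)=1$ without changing any sign, which yields~\eqref{cond:sem}.

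For the converse I must manufacture the cone from~\eqref{cond:sem}. The key observation is that~\eqref{cond:sem} places every gradient $\nabla s_1(x)$ in the open half-space $H=\{w\mid v_1^Tw>0\}$. On a compact set $\cC\subset\cB(x^\ast)$ the continuous map $x\mapsto\nabla s_1(x)$ has compact image inside $H$, so the angle between $v_1$ and $\nabla s_1(x)$ is bounded away from $\pi/2$. I would then let $\cK^\ast$ be a Lorentz (ice-cream) cone with axis $v_1$ and half-angle $\theta$ chosen strictly between that supremum and $\pi/2$, and set $\cK$ to be its dual (so that, by the bipolar theorem, $\cK^\ast$ is recovered as the dual of $\cK$). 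By the choice of $\theta$ we have $\nabla s_1(x)\in\inter(\cK^\ast)$ on $\cC$; and since $\cK^\ast\setminus\{0\}\subset H$, the duality fact gives $v_1\in\inter(\cK)$, so $\cK$ is a proper cone realizing both inclusions. Strict monotonicity of $s_1$, namely $s_1(x)>s_1(y)$ whenever $x\succ_\cK y$, then follows from the fundamental theorem of calculus along the segment from $y$ to $x$, whose integrand $\nabla s_1(\cdot)^T(x-y)$ is positive because $\nabla s_1(\cdot)\in\inter(\cK^\ast)$ and $x-y\in\cK\setminus\{0\}$. The sufficiency part of Theorem~\ref{prop:ev-mon}(ii) then delivers strong eventual monotonicity with respect to $\cK$, uniform on $\cC$ by part~(iii).

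I expect two technical points to require the most care. First, the fundamental-theorem argument needs the straight segment $[y,x]$ to remain in the region where $\nabla s_1\in\inter(\cK^\ast)$; this is guaranteed once $\cC$ is convex (or $\cK$-p-convex), since then the segment lies in $[y,x]_\cK\subset\cC$, so I would carry out the construction on convex compact subsets of $\cB(x^\ast)$. Second, and more delicate, is whether the half-angle bound survives globally: over the whole, possibly non-compact, basin the directions $\nabla s_1(x)/\|\nabla s_1(x)\|$ could accumulate on $\partial H$, in which case no single ice-cream cone with half-angle below $\pi/2$ encloses them. This is the main obstacle, and the reason the cone is built on compact subsets, with uniformity recovered from Theorem~\ref{prop:ev-mon}(iii).
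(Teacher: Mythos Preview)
Your proposal is correct and follows essentially the same route as the paper. Necessity is identical: $v_1\in\inter(\cK)$ and $\nabla s_1(x)\in\inter(\cK^\ast)$ give $v_1^T\nabla s_1(x)>0$ by duality. For sufficiency the paper simply invokes Remark~\ref{rem:pos-cert} to obtain the cone and then, instead of passing through Theorem~\ref{prop:ev-mon}(ii), plugs the mean-value expression $s_1(x)-s_1(y)=(\nabla s_1(\xi))^T(x-y)$ directly into the flow expansion~\eqref{eq:phi-bound}. Your version is slightly more explicit: you actually construct the Lorentz cone about $v_1$ (which is the content of Remark~\ref{rem:pos-cert}, left unproved in the paper) and then use the fundamental theorem of calculus along the segment to verify the strict-monotonicity hypothesis of Theorem~\ref{prop:ev-mon}(ii). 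The two arguments are interchangeable.

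Your caveats about convexity of $\cC$ (so that the segment $[y,x]$ stays in the domain where $\nabla s_1$ is controlled) and about the possible failure of a uniform half-angle bound on a non-compact basin are well taken; the paper's proof is equally terse on both points, relying on the mean-value theorem on the segment and on Remark~\ref{rem:pos-cert} without spelling out the compactness needed for the cone to exist globally.
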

\begin{rem}\label{rem:pos-cert}
The condition~\eqref{cond:sem} is equivalent to the existence of a cone $\cK$ such that $v_1\in\inter(\cK)$ and $\nabla s_1(x) \in \inter(\cK^*)$ for all $x\in \cB(x^\ast)$, where $\cK^\ast$ is the dual cone to $\cK$.
\end{rem}

For linear systems, the condition~\eqref{cond:sem} is always fulfilled, which is implicitly used in Corollary~\ref{prop:dom-eig}. 
Corollary~\ref{prop:another-charact} provides a (necessary and sufficient) certificate to determine whether there exists a cone with respect to which a system is strongly eventually monotone. We can verify the certificate by computing the gradient $\nabla s_1(x)$ through Laplace averages (see Section~\ref{sec:num_computation}). In addition, the condition $\nabla s_1(x) \in \inter(\cK^*)$ is equivalent to finding an order $\gg_\cK$ with respect to which no pair $x$, $y$ on the same isostable is such that $x\gg_\cK y$. Hence, Corollary~\ref{prop:another-charact} also offers a graphical tool (at least for planar systems) to find candidate cones $\cK$ with respect to which the system may be strongly eventually monotone. Furthermore, even if a system is strongly eventually monotone with respect to $\Rnn^n$, $\nabla s_1(x)$ can have slightly negative components for some $x$ due to numerical errors. Corollary~\ref{prop:another-charact} tackles this issue as well by providing a smaller candidate cone $\cK$. We cover this idea in detail on an example in Section \ref{sec:examples}.

This result also highlights another difference between eventually monotone and monotone systems with respect to arbitrary cones $\cK$. If we suspect that a system is monotone with respect to some cone $\cK$, there is no automatic way (to our best knowledge) to compute candidate cones $\cK$ with respect to which the system is potentially monotone. In contrast, Corollary~\ref{prop:another-charact} allows this in the case of (strongly) eventually monotone systems.

\section{Examples }\label{sec:examples}
\subsection{Computation of $s_1(x)$ and its Gradient}
\label{sec:num_computation}

The property of (strong) eventual monotonicity must be studied through the properties of the eigenfunction $s_1$ of the Koopman operator. A numerical method required to compute this eigenfunction is provided by the Laplace averages \eqref{eq:Laplace_av} evaluated along the trajectories of the system. Through the Laplace averages, the value of $s_1$ can be computed for some points (uniformly or randomly) distributed in the state space. The eigenfunction $s_1$  is then obtained by interpolation. It is important to note that, for a good numerical convergence, the observable $g$ used in \eqref{eq:Laplace_av} should be $g(x)=w_1^T (x - x^\ast)$, where $w_1$ is the left eigenvector of the Jacobian matrix at the equilibrium $x^\ast$, associated with $\lambda_1$.

As shown in~\cite{mauroy2015operator}, the gradient $\nabla s_1$ can also be directly obtained through Laplace averages. Considering the prolonged system \eqref{eq:diff-pos}, we have the Laplace average
\begin{equation}
\label{eq:Laplace_gradient}
\tilde{g}^{av}_{\lambda}(x,\delta x) = \lim \limits_{t\rightarrow \infty}\frac{1}{t}\int\limits_0^t (g\circ (\phi(s,x),\partial \phi(s,x) \delta x)) e^{-\lambda s} d s
\end{equation}
with some observable $g:\mathbb{R}^n \times \mathbb{R}^n \to \mathbb{R}$. The gradient of $s_1$ can be computed as
\begin{equation*}
\nabla s_1(x)^T=\left(\begin{array}{ccc} \tilde{g}^{av}_{\lambda_1}(x,e_1), & \dots, & \tilde{g}^{av}_{\lambda_1}(x,e_n)
\end{array} \right)\,,
\end{equation*}
with $g(x,\delta x)=w_1^T \delta x$ and where $e^k$ is the $k$-th unit vector. We also point out that the eigenfunctions can be estimated by linear algebraic methods (cf.~\cite{mauroy2014global}) and directly from data using so-called dynamic mode decomposition (DMD) methods (cf.~\cite{Schmid2010,Tu2014}). 

\subsection{A Three State Nonmonotone System} \label{ex:3d}
\begin{figure}
 \begin{subfigure}[t]{\columnwidth}  \centering
 \includegraphics[width = 0.75\columnwidth]{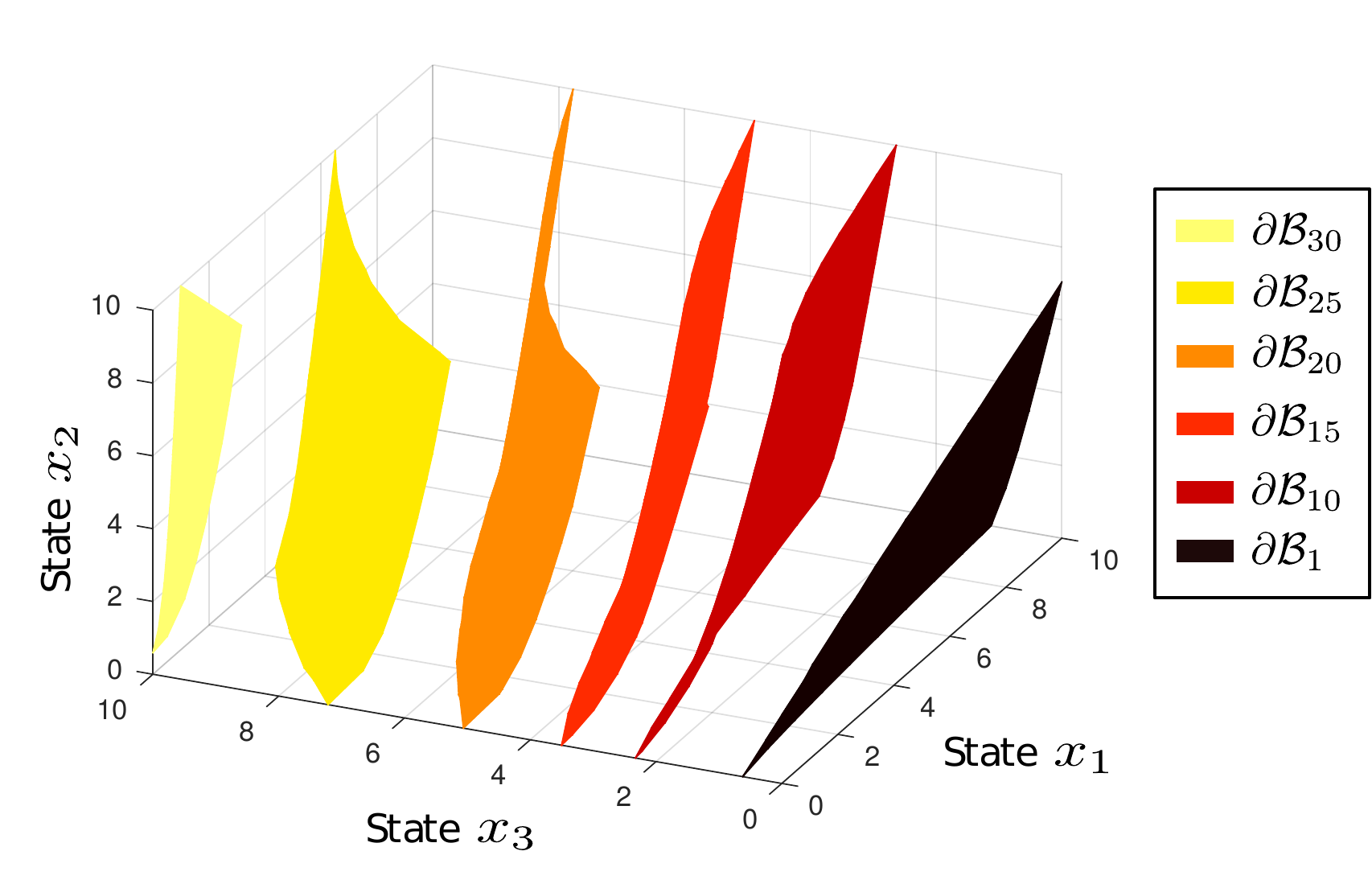}
 \caption{Level sets of $s_1(x)$.}
 \label{fig:three_state_nonmonotone}
 \end{subfigure}
  \begin{subfigure}[c]{\columnwidth}  \centering
\includegraphics[width = 0.7\columnwidth]{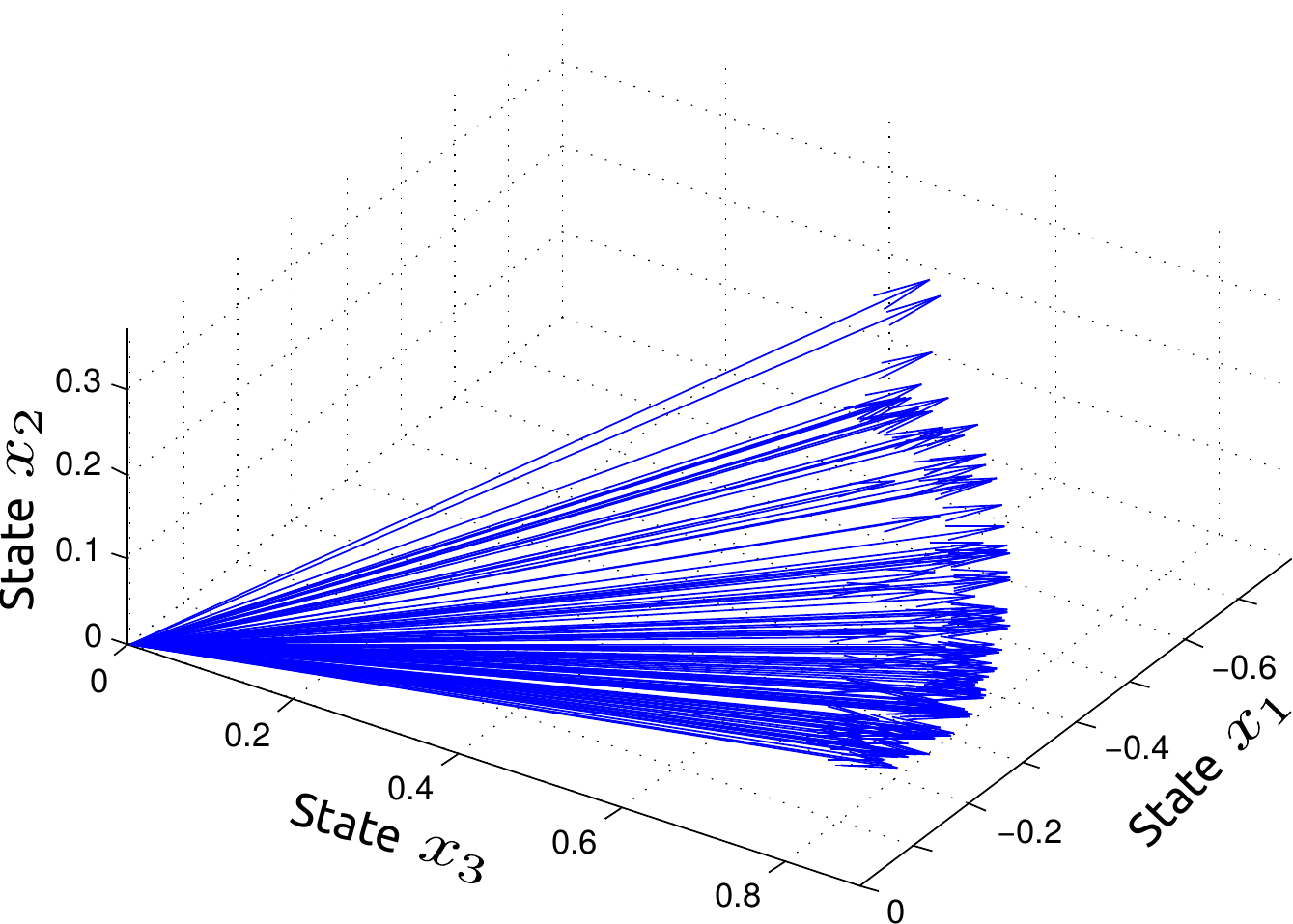}
  \caption{(Normalized) gradient of $s_1(x)$, i.e. $\nabla s_1(x)/\|\nabla s_1(x)\|$, for different states $x\in\cB(x^\ast)$. The gradient lies inside the orthant $\diag\{-1, 1, 1\}\Rnn^3$.}
  \label{fig:three_state_nonmonotone_gradient}
  \end{subfigure}
\caption{Figures describing a three state nonmonotone system (Example~\ref{ex:3d}).}
\end{figure}

Here, we revisit Example~\ref{ex:sing-pert} in Section~\ref{s:prel}. Using singular perturbation arguments, we verified that with small enough $\varepsilon$, the system is eventually monotone. Now consider the system with $\varepsilon = 1$. The second and third modes are equally fast, since the eigenvalues $\lambda_2$ and $\lambda_3$ are complex conjugate. However, the system is still strongly eventually monotone. We compute the eigenfunction $s_1$ of the Koopman operator in the basin of attraction $\cB(x^\ast)$ of the exponentially stable equilibrium $x^\ast=(3.1179, 0.2428, 1.4857)$ (Figure~\ref{fig:three_state_nonmonotone}). The computation of the gradient shows that $\nabla s_1(x)$ lies in the orthant $\diag\{-1, 1, 1\}\Rnn^3$ for all $x$ (Figure~\ref{fig:three_state_nonmonotone_gradient}). Moreover, we have $v_1=(-0.96, 0.07, 0.24) \in \diag\{-1, 1, 1\}\Rnn^3$. It follows that the condition~\eqref{cond:sem} is satisfied and, according to Corollary~\ref{prop:another-charact}, the system is strongly eventually monotone on $\cB(x^\ast)$ (with respect to that orthant $\diag\{-1, 1, 1\}\Rnn^3$). We can explain such a behavior as follows. In  Example~\ref{ex:sing-pert}, the term $h(x_1)$ is not consistent with monotonicity with respect to $\diag\{-1, 1, 1\}\Rnn^3$. However, the function is bounded between $0$ and $1$ for all $x_1$, so that its effect is not considerable enough to prevent eventual monotonicity.

\subsection{Gut Kinetics in Type I Diabetic Patients}\label{ex:gut}
A mathematical model of gut kinetics in type I diabetic patients is described by the following three state system (\cite{man2006system}):
\begin{align*}
\dot{Q}_{sto1}      & = -k_{2 1} Q_{sto1}, \\
\notag\dot{Q}_{sto2}& = -k_{empt}(z) Q_{sto2} + k_{2 1} Q_{sto1}, \\
\notag\dot{Q}_{gut} & =-k_{abs} Q_{gut} + k_{empt}(z) Q_{sto2},
\end{align*}
where $z=Q_{sto1}+Q_{sto2}$ and $k_{empt}(z)= k_{min} + \frac{k_{max} -k_{min}}{2}
(\tahn(\alpha(z-b))-\tahn(\beta (z-c)) + 2 )$.

According to a physiologic intuition, the more carbohydrates are ingested the higher the output $Q_{gut}$ should be under normal conditions (patient does not experience stress, does not exercise at the moment, etc.). Although the mathematical model is not monotone with respect to any orthant, it fulfills all the necessary conditions for eventual monotonicity. Indeed, the eigenfunction $s_1$ is strictly monotone in the state variables $Q_{sto1}$ and $Q_{sto2}$ (Figure \ref{fig:gut_kinetics}) and does not depend on the state $Q_{gut}$ (due to the cascade structure of the model). Moreover, $v_1=(0,0.94,0.34) \in \Rnn^3$.
\begin{figure}[t]
  \centering
\includegraphics[width = 0.65\columnwidth]{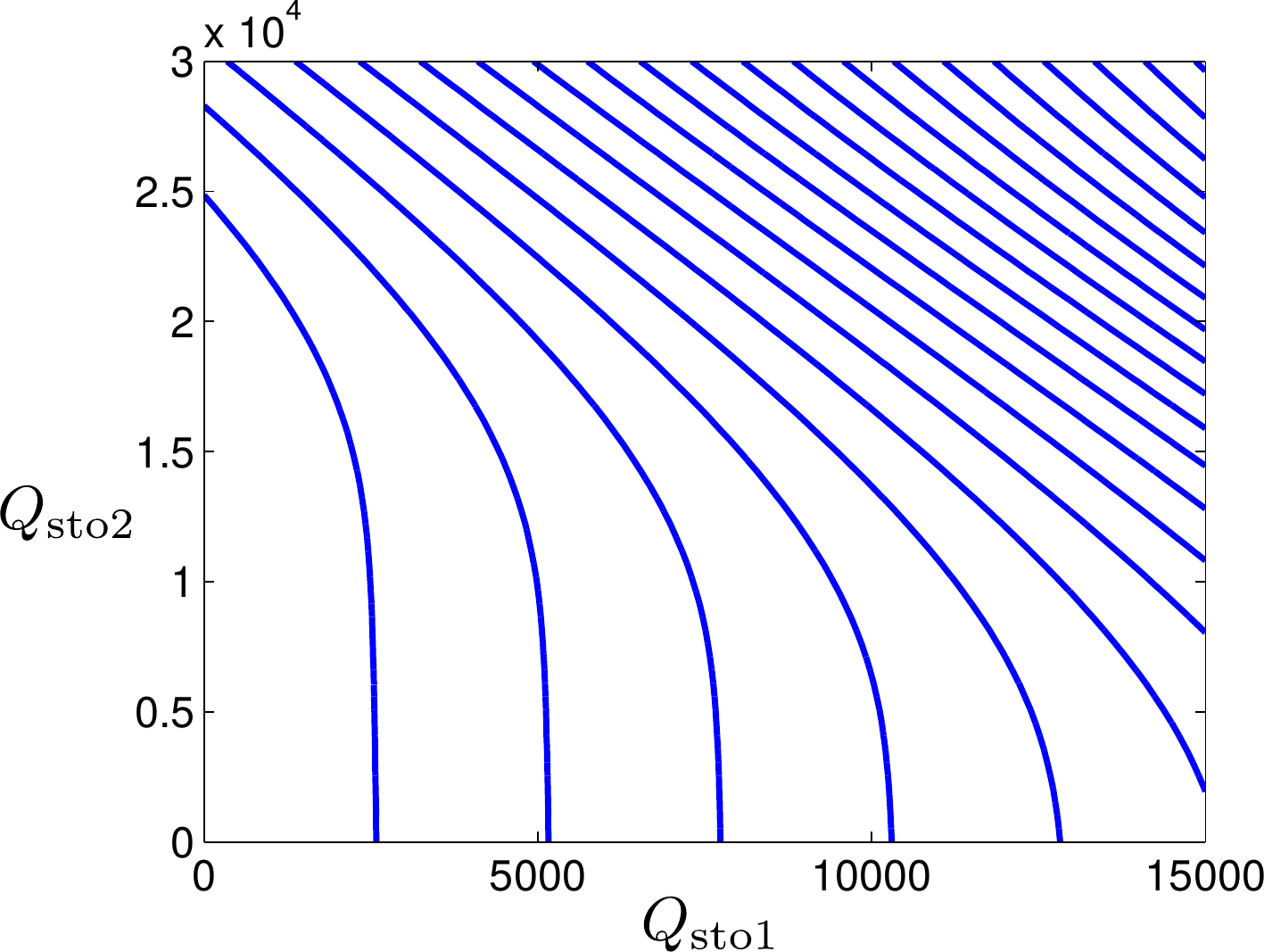}
  \caption{Level sets of $s_1$ (cross-section for a constant $Q_{gut}$) for the model of gut kinetics (Example~\ref{ex:gut}). The eigenfunction is strictly monotone in the two state variables $Q_{sto1}$ and $Q_{sto2}$.}
  \label{fig:gut_kinetics}
\end{figure}

All the eigenvalues of the Koopman operator are negative and distinct. However, since $v_1$ has a zero component in the $Q_{sto1}$ direction (and the gradient $\nabla s_1$ in the $Q_{gut}$ direction), the system is not strongly eventually monotone with respect to $\Rnn^3$. This is not surprising, since the Jacobian matrix is reducible for every $x\in\Rnn^3$. Nevertheless, numerical computations (using~\eqref{eq:Laplace_gradient}) show that $v_1^T \nabla s_1(x) >0$ for all $x \in \Rnn^3$, and Corollary~\ref{prop:another-charact} implies that the system is strongly eventually monotone with respect to some cone in $\R^3$. This example illustrates that even a system with a reducible Jacobian matrix can still be strongly eventually monotone albeit with respect to a different cone.

\subsection{Toxin-Antitoxin System}\label{ex:ta}
Consider the toxin-antitoxin system studied in~\cite{cataudella2013conditional}:
\begin{align*}
\dot T &= \frac{\sigma_T}{\left(1 + \frac{[A_f][T_f]}{K_0}\right)(1+\beta_M [T_f])} - \dfrac{1}{(1+\beta_C [T_f])} T \\
\dot A &= \frac{\sigma_A}{\left(1 + \frac{[A_f][T_f]}{K_0}\right)(1+\beta_M [T_f])} - \Gamma_A  A \\
\varepsilon [\dot A_f] &= A - \left([A_f] + \dfrac{[A_f] [T_f]}{K_T} + \dfrac{[A_f] [T_f]^2}{K_T K_{T T}}\right) \\ 
\varepsilon [\dot T_f] &= T - \left([T_f] + \dfrac{[A_f] [T_f]}{K_T} + 2 \dfrac{[A_f] [T_f]^2}{K_T K_{T T}}\right), 
\end{align*}
where $A$ and $T$ is the total number of toxin and antitoxin proteins, respectively, while $[A_f]$, $[T_f]$ is the number of free toxin and antitoxin proteins.
In~\cite{cataudella2013conditional}, the authors considered the model with $\varepsilon = 0$. In order to simplify our analysis we set $\varepsilon = 10^{-6}$. With the parameters
$\sigma_T = 166.28$, $K_0 = 1$, $\beta_M = \beta_c =0.16$, $\sigma_A = 10^2$
$\Gamma_A = 0.2$, $K_T = K_{TT} = 0.3$, the system is bistable with two exponentially stable equilibria:
\begin{gather*}
 x^\ast = \begin{pmatrix}  27.1517 &  80.5151 & 58.4429 & 0.0877  \end{pmatrix} \\ 
 x^{\bullet} = \begin{pmatrix} 162.8103 & 26.2221  &  0.0002 & 110.4375 \end{pmatrix}. 
\end{gather*}
We consider the basin of attraction of the equilibrium $x^\ast$ and compute the level sets of $s_1(x)$. Because of the slow-fast dynamics of the model, the eigenfunction is (almost) constant in the variables $[A_f]$ and $[T_f]$. Its values in the cross-section $([A_f],[T_f])=(x^\ast(3),x^\ast(4))$ are shown in Figure~\ref{fig:tat-cl}. As the reader may notice, some level sets in the cross-section are consistent with eventual monotonicity with respect to the orthant $\cK = (1 ,-1) \Rnn^2$. There are however level sets $\partial\cB_\alpha$ of $s_1(x)$, which contain comparable points and hence these level sets are not consistent with eventual monotonicity with respect to $\cK$. However, every level set contains incomparable points with respect to a cone $\tilde{\cK}$ depicted in red in Figure~\ref{fig:tat-cl}. According to Corollary~\ref{prop:level-set-proper}, this suggests that the system might be strongly eventually monotone with respect to a cone (whose projection on the cross-section is $\tilde{\cK}$). Indeed, numerical computations (using~\eqref{eq:Laplace_gradient}) show that $v_1^T \nabla s_1(x) >0$ for all $x \in \Rnn^4$, so that Corollary~\ref{prop:another-charact} can be applied. Note that the reduced order model (i.e. with $\varepsilon = 0$) is only locally monotone (around $x^\ast$, $x^\bullet$) with respect to the orthant $\cK = (1 ,-1) \Rnn^2$, which we verify by numerically computing the Jacobian matrix of the reduced order system.

 \begin{figure}[t]
 	\centering
 	\includegraphics[width = 0.65\columnwidth]{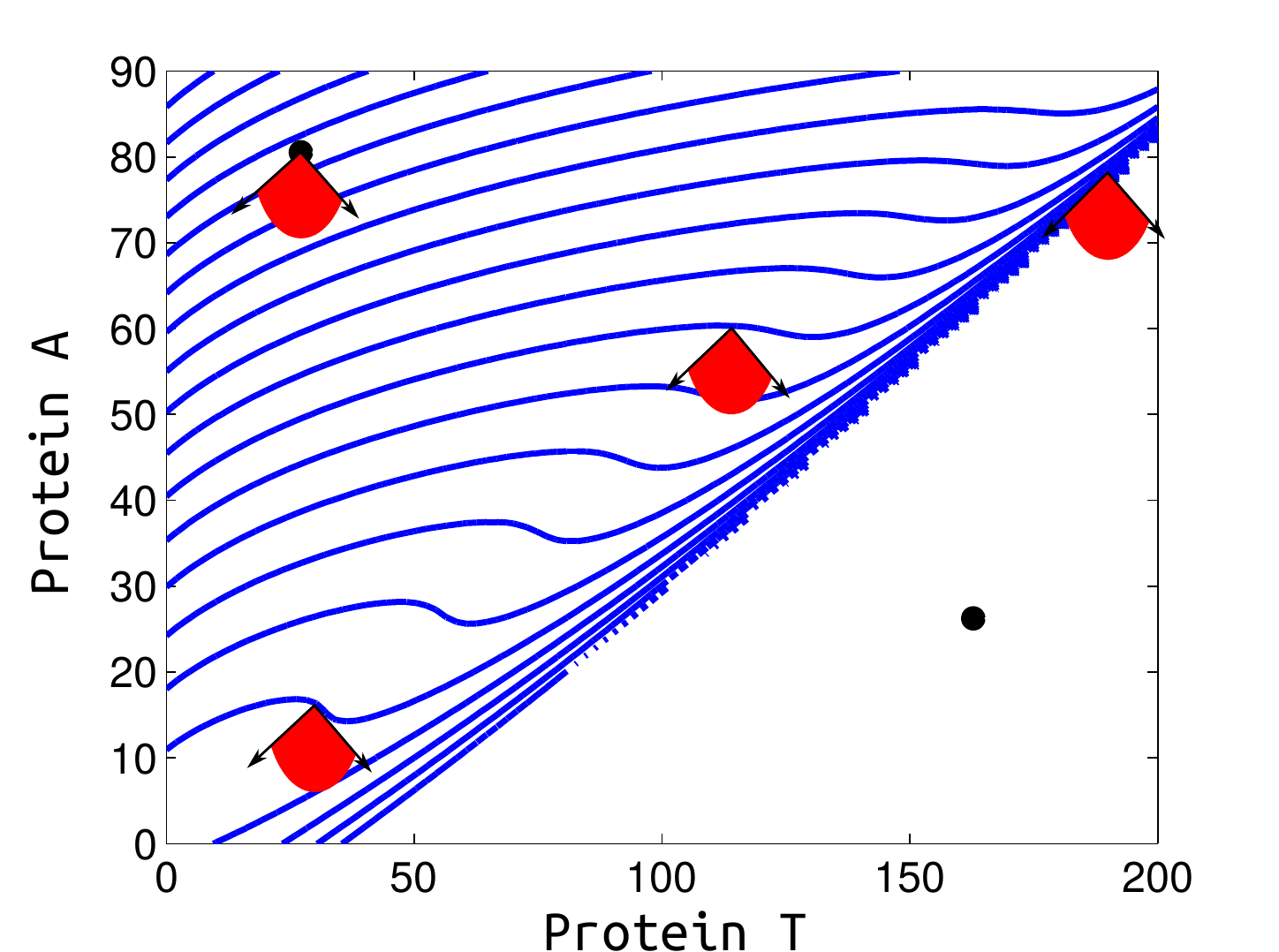}
 	\caption{Level sets of $s_1(x)$ for the toxin-antitoxin system (Example~\ref{ex:ta}) in the cross-section $([A_f],[T_f])=(x^\ast(3),x^\ast(4))$. The system is strongly eventually monotone with respect to a cone, whose projection in the cross-section is the red cone $\tilde{\cK}$ shown on the picture.}
 	\label{fig:tat-cl}
 \end{figure}

\subsection{FitzHugh-Nagumo Model}
The excitable FitzHugh-Nagumo model (\cite{fitzhugh1961impulses, nagumo1962active}) is described by the following equations
\begin{gather*}
\dot v = - w -v(v-1)(v-a) + I, \\
\dot w = \epsilon (v -\gamma w).
\end{gather*}
We take $a = 1$, $I = 0.05$, $\epsilon = 0.08$, $\gamma = 1$, which results in a system with a Jacobian matrix at the equilibrium having simple, real, negative eigenvalues. This implies that the system is locally eventually monotone around the exponentially stable equilibrium. However, it is not globally eventually monotone with respect to any cone $\cK\in\R^2$, which is verified by the fact that some level sets of $s_1$ contain comparable points in all possible orderings (see Figure~\ref{fig:fitzhugh}).

\begin{figure}[t]
  \centering
\includegraphics[width = 0.75\columnwidth]{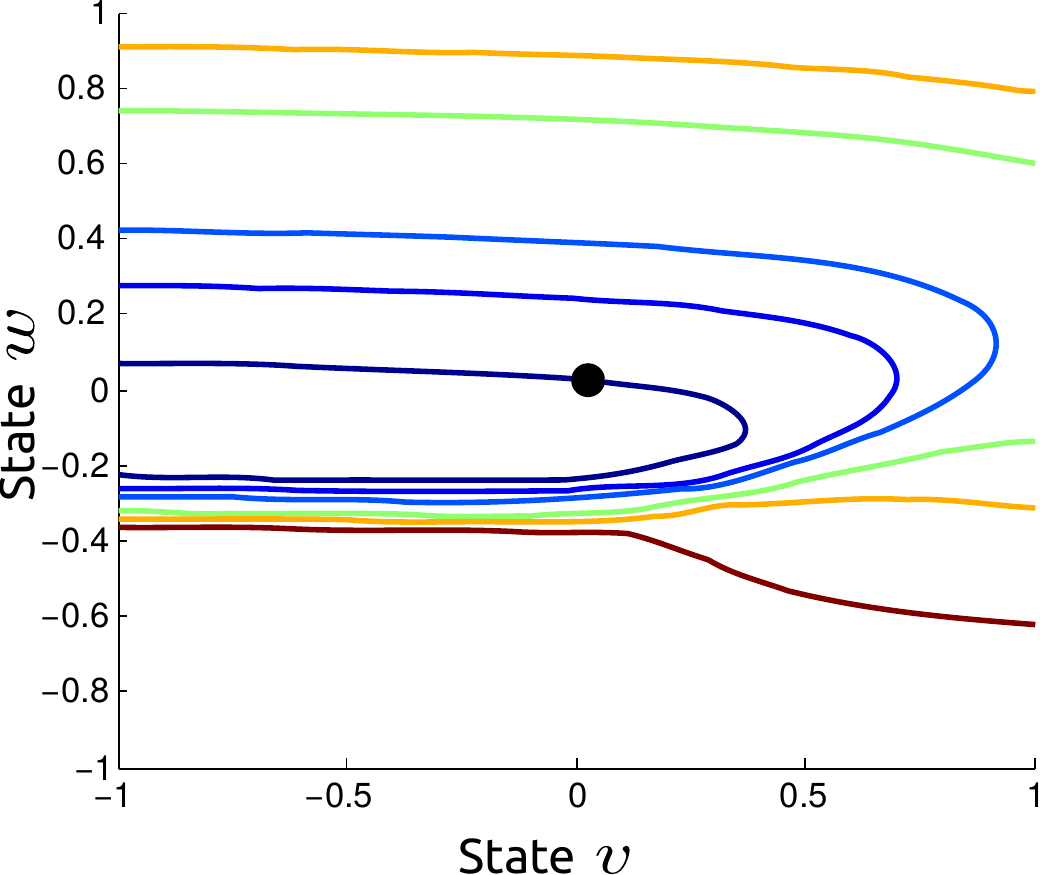}
  \caption{Level sets of $s_1(x)$ for the FitzHugh-Nagumo model with $a = 1$, $I = 0.05$, $\epsilon = 0.08$. The dot is the exponentially stable equilibrium, around which the system is locally eventually monotone. The system, however, is not globally eventually monotone with respect to any cone in $\R^2$.}
  \label{fig:fitzhugh}
\end{figure}
\section{Discussion and Conclusion}\label{s:con}

In this paper, we have provided a characterization of (strongly) eventually monotone systems using spectral properties of the so-called Koopman operator. Our results indicate that eventually monotone systems possess many asymptotic properties of monotone systems, possibly providing a valuable theoretical generalization of monotonicity. We present examples of systems, which cannot be confirmed to be monotone, but are strongly eventually monotone. The examples describe biological and biomedical processes, showing that there are potentially many applications of eventual monotonicity. Moreover, the spectral operator-theoretic framework considered in this paper offers a numerical tool to compute candidate cones with respect to which the system may be strongly eventually monotone. To our best knowledge, no such tool exists for monotone systems.

Strong eventual monotonicity has applications in model reduction. If a full order system is strongly eventually monotone, then there is a possibility that model reduction of fast states leads to a monotone system. This can lead to model reduction methods enforcing monotonicity on a reduced order dynamical system. Moreover, the results by~\cite{wang2008singularly} are based on what we call strong eventual monotonicity and our certificates provide a tool, which facilitates the application of~\cite{wang2008singularly} to a broad class of systems.

The main drawback of our theoretical development is the absence of a polynomial-time certificate for eventual monotonicity. Since we have derived a positivity certificate for eventual monotonicity, a polynomial-time version of this certificate could potentially be obtained through sum-of-square techniques (\cite{papachristodoulou2013sostools}). If we can certify that the system is strongly eventually monotone, then we can compute its basins of attraction with a high accuracy as discussed in~\cite{sootla2016basins,sootla2016geometry}.

We note that our work may potentially be related to the recent work by~\cite{daners2016eventually}, where the authors study eventually positive semigroups of linear operators in a general context. We leave it for future research. Finally, we aim at extending the concept of eventual monotonicity to \emph{open} or \emph{control} systems, which may lead to simple control strategies as the ones described in~\cite{sootla2015pulsesacc}.

\bibliography{bibl_koopman}
\appendix
\section{Proofs}\label{app:proofs}
For the proof of Proposition~\ref{prop:ev-pos-dyn}, we need the following result from~\cite{noutsos2008reachability}.
\begin{prop} \label{prop:exp-pos}
	Let $A\in\R^{n\times n}$. Then: \\
	(i) Let there exists a $\tau_0\ge 0$ such that for all $t \ge \tau_0$, the matrix $e^{t A}$ is nonnegative, then there exists a scalar $a$ such that $A + a I$ is a $\wpfn$ matrix; \\
	(ii) There exists a $\tau_0\ge 0$ such that for all $t \ge \tau_0$, the matrix $e^{t A}$ is positive if and only if there exists a scalar $a$ such that $A + a I$ is a $\pfn$ matrix.
\end{prop}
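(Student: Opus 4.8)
The plan is to reduce the continuous-time statement to the discrete-time theory already encoded in the inclusions~\eqref{inclusion:pfn}. I would fix a time step $h>0$ and set $B=e^{hA}$, so that $B^k=e^{khA}$. If $e^{tA}$ is nonnegative (respectively positive) for all $t\ge\tau_0$, then $B^k\ge 0$ (respectively $B^k>0$) for every $k\ge\tau_0/h$, i.e.\ $B$ is an eventually nonnegative (respectively eventually positive) matrix, and since $B=e^{hA}$ is invertible it is nonnilpotent. Hence~\eqref{inclusion:pfn} yields $B\in\wpfn$ in the nonnegative case and $B\in\pfn$ in the positive case. To transfer this Perron--Frobenius data back to $A$, I relate the spectra: the eigenvalues of $B$ are $e^{h\lambda_j}$, so $\rho(B)=e^{h\mu}$ with $\mu=\max_j\Re(\lambda_j)$. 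Choosing $h$ outside a discrete exceptional set (so that $e^{h\lambda_i}=e^{h\lambda_j}$ forces $\lambda_i=\lambda_j$, and $e^{h\lambda_j}=e^{h\mu}\in\Rp$ forces $\lambda_j=\mu$ real), a short generalized-eigenspace computation identifies $\ker(B-e^{h\mu}I)$ with $\ker(A-\mu I)$: on each generalized eigenspace $A=\lambda_j I+N_j$ with $N_j$ nilpotent, and $e^{hN_j}-I=hN_j(I+\tfrac h2 N_j+\cdots)$ has the same kernel as $N_j$. Thus the nonnegative (respectively positive) Perron eigenvector of $B$ is a genuine eigenvector of $A$ for the real eigenvalue $\mu$, and likewise for $A^T$.

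Part~(ii) is then self-contained. Membership $B\in\pfn$ forces $\rho(B)=e^{h\mu}$ to be simple and \emph{strictly} dominant in modulus, i.e.\ $e^{h\mu}>e^{h\Re(\lambda_j)}$ for every other $j$; hence $\mu$ is real, simple, and $\mu>\Re(\lambda_j)$ for all $j\ge 2$, with positive right and left eigenvectors. The estimate $(\mu+a)^2-|\lambda_j+a|^2=(\mu-\Re(\lambda_j))(\mu+\Re(\lambda_j)+2a)-(\Im(\lambda_j))^2$ is positive for all $a$ large enough (in particular with $\mu+a>0$), so $\rho(A+aI)=\mu+a$ is the simple, strictly dominant, positive eigenvalue of $A+aI$ and of $(A+aI)^T$ with the same positive eigenvectors; therefore $A+aI\in\pfn$. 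For the converse, $A+aI\in\pfn$ says precisely that $\mu=\lambda_1$ is real, simple and strictly dominant with positive eigenvectors $v_1,w_1$, whence the exact linear expansion gives $e^{-\mu t}e^{tA}\to v_1 w_1^T/(w_1^T v_1)>0$, so $e^{tA}>0$ for all large $t$.

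For part~(i) the same discretization gives $B\in\wpfn$, the generalized-eigenspace step produces $\mu$ real with nonnegative eigenvectors of $A$ and $A^T$, and the shift construction above finishes once $\mu+a$ is the spectral radius. \textbf{The hard part will be here.} Since $\wpfn$ only guarantees that $\rho(B)$ \emph{is} an eigenvalue, not that it is the unique eigenvalue of maximal modulus, the discrete inclusion alone does not exclude a complex eigenvalue $\lambda_j=\mu\pm i\omega$ ($\omega\neq 0$) sharing the maximal real part. Yet such an eigenvalue would make $\rho(A+aI)=\sqrt{(\mu+a)^2+\omega^2}>\mu+a$ be attained by complex eigenvalues for every $a$, so $\rho(A+aI)$ would never be a real eigenvalue and $A+aI\notin\wpfn$; excluding it is therefore essential and must come from the continuous structure rather than from~\eqref{inclusion:pfn}. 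I would rule it out by analysing the asymptotics: writing $e^{-\mu t}e^{tA}=P_0+\sum_k\big(\cos(\omega_k t)\,C_k+\sin(\omega_k t)\,S_k\big)+o(1)$, a peripheral eigenvalue off the real axis contributes a genuinely oscillatory, non-constant almost-periodic term, forcing either some entry or the quantity obtained by pairing $e^{tA}x$ against a left eigenvector of $\mu\pm i\omega$ to rotate and change sign for arbitrarily large $t$, contradicting eventual nonnegativity. Equivalently, one may invoke the peripheral-spectrum structure of eventually nonnegative matrices (modulus-maximal eigenvalues are $\rho(B)$ times roots of unity) and choose $h$ so that $e^{ih\omega}$ is not a root of unity for any nonzero $\omega=\Im(\lambda_j)$. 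With complex dominant eigenvalues excluded, $\mu+a$ is the modulus-maximal eigenvalue of $A+aI$ and the nonnegative eigenvectors already obtained give $A+aI\in\wpfn$, completing~(i).
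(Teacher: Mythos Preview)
The paper does not prove this proposition: it is quoted verbatim as ``the following result from~\cite{noutsos2008reachability}'' and used as a black box in the proof of Proposition~\ref{prop:ev-pos-dyn}. There is therefore no proof in the paper to compare against.

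That said, your strategy is sound and essentially recovers what is in the Noutsos--Tsatsomeros line of work. Discretising to $B=e^{hA}$ and invoking the inclusions~\eqref{inclusion:pfn} is the natural move; the spectral transfer via $\ker(e^{hN_j}-I)=\ker N_j$ and the generic choice of $h$ to avoid eigenvalue collisions are exactly what is needed to pass Perron--Frobenius data from $B$ back to $A$. Your treatment of part~(ii), including the shift argument $(\mu+a)^2-|\lambda_j+a|^2>0$ for large $a$ and the converse via $e^{-\mu t}e^{tA}\to v_1w_1^T/(w_1^Tv_1)$, is complete.

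For part~(i) you correctly isolate the genuine obstruction: the weak Perron--Frobenius property of $B$ does not by itself rule out a complex eigenvalue $\mu\pm i\omega$ sharing the maximal real part, and such an eigenvalue would make $\rho(A+aI)$ non-real for every $a$. Of your two proposed resolutions, the second is the one that actually closes the gap. Since $B^{k_0}\ge 0$, the peripheral eigenvalues of $B^{k_0}$ are $\rho(B)^{k_0}$ times roots of unity by classical Perron--Frobenius, hence every peripheral eigenvalue of $B$ is $\rho(B)$ times a root of unity; choosing $h$ outside the countable set where some $h\omega\in 2\pi\mathbb{Q}$ then excludes $\mu\pm i\omega$. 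Your first sketch (oscillation of a single entry or of $w^* e^{tA}x$) is not self-evidently a contradiction, because $u^T e^{tA}$ with $u$ of mixed sign acting on a nonnegative matrix can take either sign; I would drop that variant and keep the peripheral-spectrum argument as the proof.
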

\begin{proof-of}{Proposition~\ref{prop:ev-pos-dyn}}
	(i) If the flow $\phi(t, x) = e^{A t} x$ is nonnegative for all $t\ge \tau_0$ for any nonnegative $x$, then $e^{A t}$ is nonnegative for all $t\ge \tau_0$. By Proposition~\ref{prop:exp-pos} there exists a scalar $a$ such that $A + a I$ is a $\wpfn$ matrix. This implies that there exist nonnegative right and left eigenvectors $v_1$ and $w_1$ corresponding to a real $\tilde \lambda_1$ such that $\tilde \lambda_1 > |\tilde \lambda_i|$ for all $i\ge 2$, where $\tilde \lambda_i$ are the eigenvalues of $A + a I$. It is straightforward to show that $\tilde \lambda_i = \lambda_i + a$, from which it follows that $\lambda_1$ is real and there exists no eigenvalue $\lambda_i$ such that $\lambda_1 = \Re(\lambda_i)$ and $\Im(\lambda_i) \ne 0$. Furthermore $v_1$, $w_1$ are also eigenvectors of $A$ corresponding to $\lambda_1$.\\
	(ii) \emph{Necessity.} If the flow $\phi(t, x) = e^{A t} x$ is positive for all $t\ge \tau_0$ for any nonnegative, nonzero $x$, then $e^{A t}$ is positive for all $t\ge \tau_0$. By Proposition~\ref{prop:exp-pos} there exists a scalar $a$ such that $A + a I$ is a $\pfn$ matrix. As in the point (i), we can show that $\lambda_1$ is simple, and the right and left eigenvectors $v_1$ and $w_1$ corresponding to  $\lambda_1$ can be chosen to be positive. Furthermore, $\lambda_1 > \Re(\lambda_j)$ for all $j\ge 2$.\\
	\emph{Sufficiency.} 
	Let $v_i$, $w_i$ be the right and left eigenvectors corresponding to the eigenvalues $\lambda_i$. Let $v_1$, $w_1$ be positive and $\lambda_1$ be real and $R(t,x) = \sum\limits_{i =2}^n e^{(\lambda_i -\lambda_1)t} v_i w_i^T x$. Then we have
	\[
	\phi(t, x) = e^{A t} x = e^{\lambda_1 t} \left(v_1 w_1^Tx + R(t,x)\right)\,.
	\]
	Since $v_1$, $w_1$ are positive and $\Re(\lambda_i) < \lambda_1$ for all $i\ge 2$, there exists a time $\tau_0$ such that $R(t,x)  \ll v_1 w_1^T x$ for all $t\ge\tau_0$. Hence we have that $\phi(t,x) \gg 0$.
\end{proof-of}
\begin{proof-of}{Proposition~\ref{prop:pos-cone}}
	(i) Let $y = e^{A t}x$ for $t>0$, then 
	\begin{align*}
	& (w_1^T y)^2 - \sum\limits_{i =2}^n \alpha_i |w_i^T y|^2 \\
	& \qquad  = (w_1^T e^{A t} x)^2   - \sum\limits_{i =2}^n\alpha_i |w_i^T e^{A t} x|^2\\
	& \qquad = e^{2 \lambda_1 t} (w_1^T x)^2 - \sum\limits_{i =2}^n\alpha_i |e^{\lambda_i t}|^2  |w_i^T x|^2 \\
	& \qquad = e^{2 \lambda_1 t} \Bigl( (w_1^T x)^2 - 
	\sum\limits_{i =2}^n\alpha_i |e^{(\lambda_i-\lambda_1) t}|^2 |w_i^T x|^2\Bigl).
	\end{align*}
	Since $\lambda_1 >\Re(\lambda_i)$ for all $i>1$ we have that $|e^{(\lambda_i-\lambda_1) t}|^2 < 1$ for all $t>0$, which in turn implies that
	\begin{multline*}
	(w_1^T y)^2 - \sum\limits_{i =2}^n \alpha_i |w_i^T y|^2 \ge \\e^{2 \lambda_1 t} \left( (w_1^T x)^2 - \sum\limits_{i =2}^n\alpha_i |w_i^T x|^2\right), 
	\end{multline*}
	and $y = e^{At} x$ belongs to $\cK_\alpha$ if $x$ does.\\
	(ii) \emph{Necessity.} Since $\cK_0 = \{y \in\R^n | w_1^T y \ge 0\}\supset\inter(\Rnn^n)\cup\{0\}$, there exist $\gamma_i$ small enough that $\cK_0 \supset\cK_\gamma \supset\inter(\Rnn^n)\cup\{0\}$. Similarly, there exist $\beta_i$ large enough such that $\inter(\Rnn^n)\cup\{0\}\supset\cK_\beta\supset\cK_\infty = \{y \in\R^n | y = \Delta v_1, \Delta >0 \}$. \\
	\emph{Sufficiency.} We have that $\cK_\infty \subset \cK_{\beta} \subset \inter(\Rnn^n)\cup\{0\}$, hence the vector $v_1$ is contained in $\cK_{\beta}$, which entails that the condition $\cK_{\beta} \subset \inter(\Rnn^n)\cup\{0\}$ ensures positivity of the eigenvector $v_1$. Now let $ \Rnn^n \subset \inter(\cK_{\gamma})\cup\{0\}$ for a positive $\gamma$. Hence the scalars $w_1^T e^j$ (where $e^j$ is $j$-the unit vector) are positive for all $j$, and $w_1$ is positive. Taking into account the arguments above, we conclude that the system $\dot x = A x$ is strongly eventually positive.
\end{proof-of}

\begin{proof-of}{Corollary~\ref{prop:dom-eig}}
	Let $v$ and $w$ be the right and left eigenvectors corresponding to the dominant eigenvalue $\lambda_1$ of $A$. According to Proposition~\ref{prop:pos-cone}, we need to show that there exists an invertible matrix $S$ such that $S^{-1} v$ and $S^T w $ are positive. Without loss of generality, we assume that the first entry of $w$ is nonzero. We can find a transformation $S$ such that $w^T S  = \bfone^T/n$ and $S \bfone  = v$ as follows
	\begin{gather*}
	S =  I_n/n + \begin{pmatrix}
	v -  \bfone/n & 0_{n\times n-1}
	\end{pmatrix}+\begin{pmatrix}
	\frac{(\bfone - w)^T}{w(1) n} \\
	0_{n-1\times n}
	\end{pmatrix} + S_0,
	\end{gather*}
	where $I_n$ is the $n\times n$ identity matrix, $0_{k\times m}$ is the $k\times m$ zero matrix, and $S_0$ is a zero matrix except for one entry, where $S_0(1,1) = (-1/w(1) +  w^T \bfone/(w(1) n))$. We verify the claim by direct calculations: 
	\begin{multline*}
	S \bfone =  \bfone/n + v - \bfone/n + \frac{(\bfone - w)^T}{w(1) n} \bfone \\+ (-1/w(1) +  w^T \bfone/(w(1) n)) = v
	\end{multline*}
	Similarly 
	\begin{multline*}
	w^T S =  w^T/n  + \begin{pmatrix}
	w^T v -  \frac{w^T\bfone}{n} & 0_{1\times n-1}
	\end{pmatrix} + \\ \frac{(\bfone - w)^T}{n} + 
	\begin{pmatrix}
	-1 + \frac{w^T \bfone}{n} & 0_{1\times n-1}
	\end{pmatrix}  =  \bfone^T /n
	\end{multline*}
	In this case new dominant eigenvectors are $\widetilde v = \bfone $ and $\widetilde w = \bfone/n$.  Since the eigenvalues do not change under the similarity transformation, the dominant eigenvalue of $S^{-1} A S$ is simple and real. The second statement is straightforward.
\end{proof-of}

\begin{proof-of}{Proposition~\ref{prop:order_traject}}
(i) The system is eventually monotone, therefore for any $\delta x\succ_\cK 0$, $y\in \cC$ and a sufficiently small $h\in \Rp$ we have that $\phi(t,y+ h \delta x)\succ_\cK \phi(t,y)$ for $t\ge\tau_0$. With $h\rightarrow 0$ we get that for all $t\ge \tau_0$ the vector $\partial \phi(t,x)\delta x$ belongs to $\cK$, where $\partial \phi(t,x)$ denotes the Jacobian of $\phi(t,x)$ with respect to $x$. Let $f(x)\succeq_\cK 0$, then $ f(\phi(t, x)) = d \phi(t,x)/ dt = \partial \phi(t, x) \dot x= \partial \phi(t, x) f(x)$, which implies that $f(\phi(t, x))$ belongs to $\cK$ for all $t\ge \tau_0$. Therefore $\phi(t+\Delta t,x)\succeq_\cK \phi(t,x)$ for all $\Delta t>0$ and $t\ge \tau_0$. \\
(ii)According to (i), $f(x)\succ_\cK 0$ implies that $\phi(t+\Delta t,x)\succ_\cK \phi(t,x) \succ_\cK x^\ast$ for $\Delta t>0$ and $t\ge \tau_0$. This contradicts the fact that $x\in \cB(x^\ast)$ and $\phi(t,x) \rightarrow x^\ast$. Therefore, $f(x)\not \succ_\cK 0$ and $\phi(t,x)\not \succ_\cK x$ for all $x\succ_\cK x^\ast$ and all $t>0$.\\
(iii) We will show the result by contradiction. Let $w$, $z$ belong to $\cB(x^\ast)$, let $y\in[z, w]_\cK$ and $y \notin \cB(x^\ast)$. Without loss of generality assume that $y$ belongs to the boundary of $\cB(x^\ast)$. Therefore the flow  $\phi(t, y)$ is on the boundary of $\cB(x^\ast)$. Let the distance between $x^\ast$ and this boundary be equal to $\varepsilon$. There exists a time $T_1$ such that for all $t>T_1$ the following inequalities hold $\|x^\ast - \phi(t, w)\|_2 < \varepsilon/2$,  $\|x^\ast - \phi(t, z)\|_2 < \varepsilon/2$.
Moreover, there exists a time $T_2>T_1$ such that for all $t>T_2$ and all $\xi$ in the interval $[\phi(t, z), \phi(t, w)]_\cK$ 
we have $\|x^\ast - \xi\|_2 < \varepsilon/2$. Now build a sequence $\{y^n\}_{n=1}^{\infty}$
converging to $y$ such that all $y^n$ lie in $\cB(x^\ast)$ and also lie in $[z, w]_\cK$. Due to eventual monotonicity on $\cB(x^\ast)$, for all $n$ and $t>\tau_0$ we have $\phi(t, y^n)\in [\phi(t, z), \phi(t, w)]_\cK$. Let $T_3 = \max(T_2,\tau_0)$ and note that for all $t>T_3$, we also have that $\|x^\ast - \phi(t, y^n)\|_2 < \varepsilon/2$. Since the sequence $\{y^n\}_{n=1}^{\infty}$ converges to $y$, by continuity of solutions, for all $t>T_3$ we have $\|x^\ast - \phi(t, y)\|_2 \le \varepsilon/2$, which contradicts $\|x^\ast - \phi(t, y)\|_2 \ge \varepsilon$ for all $t>0$.
\end{proof-of}
\begin{proof-of}{Proposition~\ref{prop:Koopman_expansion}}
It follows from Theorem 2.3 in \cite{Lan} that there exists a $C^1$ diffeomorphism $y = h(x)$ such that $h(\phi(t,x)) = e^{J(x^\ast) t} h(x)$ for all $x \in \cB(x^\ast)$, $h(x^\ast) = 0$, and the Jacobian matrix of $h$ at $x^\ast$ satisfies $(\partial h/\partial x)_{x=x^\ast} = I$. Considering the first order Taylor expansion of $h^{-1}$ around $0$, we obtain
\begin{equation*}
x = x^\ast  + \left( \frac{\partial h^{-1}}{\partial y} \right)_{y=0} \, y + o(\|y\|) =  x^\ast + y + o(\|y\|)
\end{equation*}
where we used the fact that the Jacobian matrix of $h^{-1}$ satisfies $(\partial h^{-1}/\partial y)_{y=0} = I$. Moreover, $C^1$ eigenfunctions of the Koopman operator are given by $s_{j}(x) = w_j^T h(x)$ and are associated with the eigenvalues $\lambda_j$. Indeed, we have
\begin{equation*}
\begin{split}
U^t s_j(x) & = w_j^T h(\phi(t,x))  = w_j^T e^{J(x^\ast) t} h(x)  =   e^{\lambda_j t} w_j^T h(x)  \\
           &= e^{\lambda_j t} s_j(x)
\end{split}
\end{equation*}
since the eigenvectors of $J(x^\ast)$ are eigenvectors of $e^{J(x^\ast) t}$. Equivalently, $s(x) = V^{-1} y$ where $s(x) = (s_1(x) \ \cdots \ s_n(x))^T$ and $V$ is a $n \times n$ matrix whose columns are the eigenvectors $v_j$. It follows that
\begin{equation}
\label{eq:expan_x}
\begin{split}
x & = x^\ast  + V s + o(\|V s\|) \\
& = x^\ast  +  \sum \limits_{j=1}^n v_j s_j(x) + o\left(\left\|\sum \limits_{j=1}^n v_j s_j(x) \right\|\right)
\end{split}
\end{equation}
and
\begin{equation*}
\phi(t,x) = x^\ast + \sum \limits_{j=1}^n v_j s_j(x) e^{\lambda_j t} + o\left(\left\|\sum \limits_{j=1}^n v_j s_j(x) e^{\lambda_j t}  \right\|\right) .
\end{equation*}
Finally, considering the Jacobian of \eqref{eq:expan_x} at $x^\ast$, we obtain
\begin{equation*}
I = V \ \left(\frac{\partial s}{\partial x}\right)_{x=x^\ast} = \left(\frac{\partial s}{\partial x}\right)_{x=x^\ast} \ V
\end{equation*}
so that $v_j^T \nabla s_j(x^*) = 1$. This concludes the proof.
\end{proof-of}

\begin{proof-of}{Corollary~\ref{prop:level-set-proper}}
(i) Since the points $w$, $z$ are in $\cB(x^\ast)$, the interval $[z, w]_\cK$ is in $\cB(x^\ast)$. Let $y$ belong to the interval $[z, w]_\cK$, with $|s_1(w)|$, $|s_1(z)|\le \alpha$. By Theorem~\ref{prop:ev-mon}, we have $s_1(w) \ge s_1(y) \ge s_1(z)$. Therefore we have two possibilities, either $|s_1(y)| \le |s_1(w)|$ or $|s_1(y)| \le |s_1(z)|$. In both cases, $|s_1(y)| \le \alpha$.\\
(ii) Let there exist $w$, $z$ in $\left\{ x \in \R^n \Bigl| s_1(x) = \alpha\right\}$ such that $w\gg_\cK z$ for some $\alpha\in\R$. We have that $s_1(w) = s_1(z)$, but according to Theorem~\ref{prop:ev-mon}, $w\gg_\cK z$ implies that $s_1(w) > s_1(z)$. Hence no such $w$ and $z$ exist. The second part of the statement is proved in a similar manner.
\end{proof-of}
\begin{proof-of}{Proposition~\ref{prop:diff-pos-cone}}
(i) Our result is based on a similar proposition in~\cite{mauroy2015operator}, therefore we only need to prove a result similar to Proposition~1 in~\cite{mauroy2015operator}. Using the equality $(\nabla s_i(\phi(t,x)))^T \partial \phi(t,x) \delta x = e^{\lambda_i t} (\nabla s_i(x))^T \delta x$, we get
\begin{align*}
& ( (\nabla s_1(\phi(t,x))^T \partial \phi(t,x) \delta x)^2    \\
&\qquad -\sum\limits_{i =2}^n \alpha_i(x) |(\nabla s_i(\phi(t,x)))^T \partial \phi(t,x) \delta x|^2  \\
&=e^{2 \lambda_1 t} ((\nabla s_1(x))^T \delta x)^2 -  \sum\limits_{i =2}^n \alpha_i(x) |e^{ \lambda_i t}|^2  |(\nabla s_i(x))^T\delta x|^2   \\
&=e^{2 \lambda_1 t} \Bigg( ((\nabla s_1(x))^T \delta x)^2 \\
&\qquad -\sum \limits_{i =2}^n \alpha_i(x) |e^{(\lambda_i-\lambda_1) t}|^2 |(\nabla s_i(x))^T \delta x|^2\Bigg)   \\
&\ge e^{2 \lambda_1 t} \left( ((\nabla s_1(x))^T \delta x)^2 - \sum \limits_{i =2}^n \alpha_i(x) |(\nabla s_i(x))^T \delta x|^2\right) \\
&\ge 0,
\end{align*}
for all $x\in\cB(x^\ast)$ and $\delta x \in \cK(x)$. The rest of the proof is identical to the proof of Proposition~3 in~\cite{mauroy2015operator}. \\
(ii) \emph{Necessity.} 
The inclusion $\cK_{\beta}(0) \subset \inter(\Rnn^n)\cup\{0\}$ follows from Hartman-Grobman theorem and Proposition~\ref{prop:pos-cone}.
Now since $\nabla s_1(x) \in \inter(\Rnn^n)$, there exists $\gamma(x)$ with sufficiently large values such that $\Rnn^n \subset \inter(\cK_\alpha(x))\cup\{0\}$ with $\alpha_i(x)=\gamma(x)$. \\
\emph{Sufficiency.} Since $\lambda_1$ is simple and real, we only need to show that $v_1$ and $\nabla s_1(x)$ are positive for all $x$. Using Hartman-Grobman theorem, as in the linear case, $\cK_{\beta}(0) \subset \inter(\Rnn^n)\cup\{0\}$ implies that $v_1 \gg 0$. If $\Rnn^n \subset \inter(\cK_\alpha(x))\cup\{0\}$ with $\alpha_i(x) =\gamma(x)$ for some function $\gamma(\cdot)$, then  $y^T \nabla s_1(x) > 0$ for all $y\in\Rnn^n\backslash\{0\}$ and all $x$. Therefore $\nabla s_1(x) \gg 0$ for all $x\in\cB(x^\ast)$. 
\end{proof-of}

\begin{proof-of}{Corollary~\ref{prop:another-charact}} \emph{Necessity.} According to Theorem~\ref{prop:ev-mon}, we have that $v_1 \gg_\cK 0$ and $\nabla s_1(x) \gg_\cK 0$ for all $x\in \cC$. The result follows from the definition of the dual cones. \\
\emph{Sufficiency.} According to Remark~\ref{rem:pos-cert}, there exists a cone $\cK$ such that $v_1\in \inter(\cK)$ and $\nabla s_1(x)\in \inter(\cK^\ast)$. 
Similarly to~\eqref{eq:phi-bound}, for $x\succ_\cK y$ we have
\begin{equation*}
\phi(t,x)-\phi(t,y) = e^{\lambda_1 t} v_1 (\nabla s_1(\xi))^T (x-y) + \bar{R}(t)
\end{equation*}
with $x \succ_\cK \xi \succ_\cK y$. Since $\nabla s_1(\xi) \in \inter(\cK^*)$, we have $(x-y)^T \nabla s_1(\xi) >0$ and it follows that $v_1 (\nabla s_1(\xi))^T (x-y) \in \inter(\cK)$. Then \eqref{eq:r-bound} implies
\begin{equation*}
\phi(t,x)-\phi(t,y) \in \inter(\cK) \quad \forall x \succ_\cK y,\,\forall t\geq \tau_0\,,
\end{equation*}
which completes the proof.
\end{proof-of}

\end{document}